\newcommand{\ZZ}{\mathbb Z}
\newcommand{\QQ}{\mathbb Q}
\newcommand{\CC}{\mathbb C}
\renewcommand{\AA}{\mathbb A}
\newcommand{\FF}{\mathbb F}
\newcommand{\bsym}{\boldsymbol}
\newtheorem{lemm}{Lemma}[section]
\newtheorem{thm}[lemm]{Theorem}
\newtheorem{prop}[lemm]{Proposition}
\newtheorem{coro}[lemm]{Corollary}
\newtheorem{rem}[lemm]{Remark}
\newcommand{\floor}[1]{\left\lfloor #1\right\rfloor}
\newcommand{\ceil}[1]{\left\lceil #1\right\rceil}
\DeclareMathOperator{\Spec}{\mathrm{Spec}}
\newcommand{\SL}{\mathrm{SL}}
\newcommand{\GL}{\mathrm{GL}}
\newcommand{\Hom}{\mathrm{Hom}}
\newcommand{\stpt}{\sharp_{st}}
\newcommand{\symm}{\mathfrak S_3}
\newcommand{\etset}{G\text{-\'Et}}
\DeclareMathOperator{\diag}{\mathrm{diag}}
\DeclareMathOperator{\age}{\mathrm{age}}
\title{Mass formulas and stringy point-count for semi-direct products of tame abelian groups and wild symmetric or cyclic groups in characteristic three}
\author{Takahiro Yamamoto\thanks{Affiliation: Osaka City University Advanced Mathematical Institute (3-3-138, Sugimoto, Sumiyoshi-Ku, Osaka city, Osaka-fu), 

Department of Mathematics, Graduate School of Science, Osaka University (Toyonaka, Osaka 560-0043)

email:mass.11235813@gmail.com}}
\date{}
\begin{document}

\maketitle

\begin{abstract}
    In positive characteristic, there exist counterexamples to the statement corresponding to Batyrev's theorem concerning the McKay correspondence. In this paper, we give another computation of the counterexamples by using stringy-point count for some sequences of quotient varieties. There is a proof of the Serre-Bhagava's mass formula, which is important formula in the number theory, from the computation of stringy-point count of a quotient variety associated to a representation of symmetric group. Our computation of the stringy-point count in this paper is regarded as a variation of this proof to give different versions of mass formula.
\end{abstract}

\tableofcontents
\section{Introduction}
Our study begins with the following theorem, which is a version of the McKay correspondence.
\begin{thm}[\cite{Bat}]
Let \(G\subset\SL_d(\CC)\) be a finite subgroup. Suppose there exists a crepant resolution \(Y\to X\) of a quotient variety \(X=\AA_{\CC}^d/G\). Then the topological Euler characteristic \(e(Y)\) of \(Y\) and the number of conjugacy classes of \(G\) is equal:
\[
e(Y)=\sharp\mathrm{Conj}(G)
\]
\end{thm}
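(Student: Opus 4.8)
The plan is to derive the equality from the theory of stringy invariants and motivic integration, reducing the statement to a direct computation on the quotient. First I would recall the stringy $E$-function $E_{st}(X)$, a rational function in $\mathbb{L}=uv$ defined through a log resolution of $X$ and the discrepancies $a_i$ of its exceptional divisors $D_i$, so that each $D_i$ contributes a correction factor $\tfrac{\mathbb{L}-1}{\mathbb{L}^{a_i+1}-1}$; its specialization at $u=v=1$ is the stringy Euler number $e_{st}(X)$. Two facts drive the argument. For a smooth variety the stringy $E$-function equals the ordinary one, so $E_{st}(Y)=E(Y)$; and $E_{st}$ is invariant under crepant birational maps, a consequence of the change of variables formula in motivic integration (here crepancy forces every $a_i=0$, whence every correction factor equals $1$). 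Since $Y\to X$ is crepant these two facts give
\begin{equation}
E_{st}(X)=E_{st}(Y)=E(Y),\qquad\text{hence}\qquad e_{st}(X)=e(Y).
\end{equation}
It therefore suffices to compute $e_{st}(X)$ intrinsically on the quotient, without reference to $Y$.

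The second and main step is the orbifold computation of $e_{st}(X)$. Here I would lift the computation to $V=\AA_{\CC}^{d}$ and organize the arcs of $X=V/G$ by the way they lift to $V$ after a cyclic base change: an arc is classified, up to conjugacy, by its monodromy element $g\in G$, which decomposes the space of ($G$-)arcs into sectors indexed by $\mathrm{Conj}(G)$. In the sector of $[g]$ the untwisting of the ramified base change contributes a weight $\mathbb{L}^{\age(g)}$, where for $g$ of order $r$ with eigenvalues $\exp(2\pi\sqrt{-1}\,c_i/r)$, $0\le c_i<r$, one sets $\age(g)=\tfrac{1}{r}\sum_i c_i$; the base of the sector is the fixed locus $V^{g}$ modulo the centralizer $C_G(g)$. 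This yields the orbifold formula
\begin{equation}
E_{st}(X)=\sum_{[g]\in\mathrm{Conj}(G)}\mathbb{L}^{\age(g)}\,E\!\left(V^{g}/C_G(g)\right).
\end{equation}

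It remains to evaluate each summand after specializing $\mathbb{L}\mapsto 1$. The hypothesis $G\subset\SL_d(\CC)$ enters decisively: $\det g=1$ forces $\sum_i c_i\equiv 0\pmod r$, so $\age(g)\in\ZZ_{\ge0}$ is an honest nonnegative integer and $\mathbb{L}^{\age(g)}$ specializes to $1$ (this integrality is exactly the Gorenstein property that keeps $X$ canonical and makes $E_{st}$ a genuine invariant). Moreover $V^{g}$ is a linear subspace of $V$, hence an affine space with $e(V^{g})=1$, and its quotient by the finite group $C_G(g)$ is again contractible, so $e\!\left(V^{g}/C_G(g)\right)=1$. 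Thus every conjugacy class contributes exactly $1$, and
\begin{equation}
e_{st}(X)=\sum_{[g]\in\mathrm{Conj}(G)}1=\sharp\mathrm{Conj}(G),
\end{equation}
which combined with the first step proves $e(Y)=\sharp\mathrm{Conj}(G)$.

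The hardest part is the orbifold formula, and within it the change-of-variables bookkeeping that produces the age shift. One must model the arcs of the quotient (or the twisted $G$-arcs), perform the ramified transformation eigencoordinate by eigencoordinate, and check that the Jacobian collects precisely to $\mathbb{L}^{\age(g)}$ on the sector of $[g]$ while the remaining integral collapses onto $V^{g}/C_G(g)$. Making this rigorous — controlling measurability of the cylinder sets, the effect of the finite base change, and the compatibility of the two descriptions of $E_{st}(X)$ via $X$ and via the orbifold — is the analytic heart of the motivic McKay correspondence and is where essentially all of the technical work resides.
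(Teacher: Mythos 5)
The paper does not prove this statement at all: it is quoted from \cite{Bat} in the introduction as motivation for the positive-characteristic results proved later, so there is no internal proof to compare yours against. Your argument is, correctly, the classical one of Batyrev (recast motivically by Denef--Loeser): crepant invariance of $E_{st}$ plus $E_{st}(Y)=E(Y)$ for smooth $Y$ yields $e_{st}(X)=e(Y)$; the orbifold formula expresses $E_{st}(X)$ as $\sum_{[g]}(uv)^{\age(g)}E\bigl(V^g/C_G(g)\bigr)$; and the $\SL_d$ hypothesis enters exactly where you say it does, making $\age(g)$ a nonnegative integer so that at $u=v=1$ each sector contributes $e\bigl(V^g/C_G(g)\bigr)=1$, because $V^g/C_G(g)$ is a cone (and $\chi_c=\chi$ for complex varieties). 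This is also precisely the circle of ideas that Section 3 of the paper generalizes: Yasuda's wild McKay correspondence $M_{st}(X)=\int_{\Delta_G}\mathbb L^{d-\bsym v_V}$ specializes in the tame case to the orbifold sum, with the $v$-function reducing to the age by Lemma \ref{lem:tameVFct}.

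The one caveat is that your step (2), the orbifold formula, is the entire mathematical content of Batyrev's theorem; you describe the mechanism of its proof (sectors indexed by conjugacy classes of monodromies, the age arising as the Jacobian of the untwisting base change) but do not carry it out, and you flag this explicitly. As written, your argument is therefore a correct reduction of the statement to that formula, to be closed by citing \cite{Bat} or \cite{DL}; everything surrounding it --- the crepant-invariance step, the integrality of the age, and the evaluation at $u=v=1$ --- is complete and correct.
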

It is natural to ask if the same equality holds in positive characteristic when replacing the topological Euler characteristic by the \(l\)-adic Euler characteristic. The \(l\)-adic Euler characteristic is defined by the alternating sum of the dimensions of the \(l\)-adic \'etale cohomology with compact support for a prime \(l\) different from the characteristic of the base field. Because the \(l\)-adic Euler characteristic is equal to the topological Euler characteristic in characteristic zero, we get a natural extension of the assertion of Batyrev's theorem to arbitrary characteristic.

In positive characteristic, it is known that the equation of Batyrev's theorem holds for the quotient variety of the \(2n\)-dimensional representation defined as a direct sum of two standard representations of the \(n\)-th symmetric group. This follows from Kedlaya's formula \cite[Theorem 1.1]{Ked}, see \cite[Theorem 5.21]{WY}. In \cite{Yas4}, Yasuda studies quotient varieties for \(p\)-cyclic groups in characteristic \(p\). Especially, he proves that a quotient variety \(\AA^3/C_p\) has a crepant resolution if \(p=3\), and the equation of Batyrev's theorem holds. If \(p\geq 5\), the quotient variety does not have crepant resolutions. (\cite[Theorem 1.1, Corollary 6.25]{Yas4})

In the paper \cite{Yam}, we get examples of quotient varieties which have crepant resolutions and the Euler characteristic of the resolutions are not equal to the number of conjugacy classes in characteristic three. The example is given by concrete computation of crepant resolutions and its Euler characteristics. In this paper, we give another computation of the Euler characteristic of the crepant resolutions for some cases treated in \cite{Yam} using stringy-point count invariant. 

The stringy-point count equals a weighted count of \(G\)-\'etale algebras over a local field. In \cite{Yam}, we studied quotient varieties associated to representations of semi-direct products of tame abelian group and wild symmetric or cyclic group. For some of these cases, we compute the weighted count as follows.
\begin{thm}[Main theorem]\label{MTinIntro}
Let \(k\) be a finite field of order \(q\) and characteristic three. Let \(G\subset\SL_3(k)\) be a finite group without pseudo-reflections, that are \(M\in\SL_d(k)\) such that \(\dim_k(k^d)^M=d-1\), and let \(V\) be the associated 3-dimensional representation of \(G\) over \(k\). Let \(l\ne 3\) be a prime.

Suppose that \(l\) satisfies \(q-1\in l\ZZ\). Then we get the following formulas:
\[
\sum_{A\in\etset(K)}\frac{q^{3-\bsym v_V(A)}}{C_G(H_A)}=\left\{\begin{array}{cc}
q^3+(2+\frac{l-1}6)q^2+\frac{l-1}6q&(G\cong C_l\rtimes C_3)\\
q^3+(2+\frac{(l-1)(l+4)}6)q^2+\frac{(l-1)(l-2)}6q&(G\cong C_l^2\rtimes C_3)
\end{array}\right.,
\] 
where \(\etset(K)\) is the set of \(G\)-\'etale \(K\)-algebras, \(\bsym v_V\) is the \(v\)-function associated \(V\), \(H_A\) is a stable subgroup of a summand of \(A\), and \(C_G(H_A)\) is the centralized group of \(H_A\) in \(G\). For more detail, see section three.

On the other hand, when we suppose that \(l\) satisfies \(q-1\in 2l\ZZ\), we get  the following formulas:
\[
\sum_{A\in\etset(K)}\frac{q^{3-\bsym v_V(A)}}{C_G(H_A)}=\left\{\begin{array}{cc}
q^3+6q^2+q&(G\cong C_2^2\rtimes\symm)\\
q^2+\frac{(l+5)(l+7)}{12}q^2+\frac{(l+1)(l+5)}{12}q&(l\ne 2,\ G\cong C_l^2\rtimes\symm)
\end{array}\right.
\]

\end{thm}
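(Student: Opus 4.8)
The plan is to convert the weighted sum over \(G\)-\'etale algebras into an unweighted sum over homomorphisms, and then evaluate the latter stratum by stratum. Recall that a \(G\)-\'etale \(K\)-algebra \(A\) over the complete local field \(K\) corresponds to a \(G\)-conjugacy class of continuous homomorphisms \(\rho\colon G_K\to G\) from the absolute Galois group, with \(H_A=\mathrm{im}(\rho)\) the decomposition group of a summand and \(C_G(H_A)\) the stabilizer of \(\rho\) under \(G\)-conjugation. Since the weight \(q^{3-\bsym v_V}\) is conjugation-invariant, orbit counting gives
\[
\sum_{A\in\etset(K)}\frac{q^{3-\bsym v_V(A)}}{C_G(H_A)}=\frac{1}{\#G}\sum_{\rho\in\Hom(G_K,G)}q^{3-\bsym v_V(\rho)},
\]
so it suffices to evaluate the right-hand side, where the sum runs over all continuous homomorphisms and \(\#G\) equals \(3l\), \(3l^2\), \(24\), or \(6l^2\) in the four respective cases.

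Next I would stratify \(\Hom(G_K,G)\) by the ramification of \(\rho\), using the structure of \(G_K\) in characteristic three: writing \(I\) for inertia, the tame quotient of \(I\) is procyclic of order prime to \(3\) while the wild inertia is pro-\(3\), so \(\rho\) is determined by the images of a tame generator, a generator of the wild filtration, and a Frobenius lift, subject to the tame commutation relation. The unramified homomorphisms (those trivial on \(I\)) are parametrized by \(\mathrm{im}(\mathrm{Frob})\in G\), hence number \(\#G\), and all have \(\bsym v_V=0\); their contribution is exactly \(\frac{1}{\#G}\cdot\#G\cdot q^3=q^3\), producing the leading term. The remaining strata are organized by \(\rho(I)\). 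When \(\rho(I)\) lies in the tame part — a subgroup of \(C_l\), of \(C_l^2\), or generated by an order-\(2\) involution of \(\symm\) — the hypotheses \(q-1\in l\ZZ\) and \(q-1\in 2l\ZZ\) place the \(l\)-th (resp. the \(2l\)-th) roots of unity in \(k\), so these tame homomorphisms are counted by local class field theory and Kummer theory with \(q\)-independent multiplicities.

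The core of the argument is the evaluation of \(\bsym v_V\) on each stratum from its definition as the sum of a tame \emph{age} part and a wild part governed by the higher ramification filtration. For tame \(\rho\), since \(G\subset\SL_3(k)\) the three eigenvalues of any inertia generator multiply to \(1\); writing each eigenvalue as a root of unity with exponent in \([0,m)\), the exponent sum lies in \(\{0,m,2m\}\), forcing \(\age\in\{1,2\}\) for every nontrivial tame element (so a transposition of \(\symm\), acting with eigenvalues \((-1,-1,1)\), has age \(1\)). This, together with the absence of pseudo-reflections, confines the tame contributions to the integer powers \(q^2\) and \(q\). For wild \(\rho\), whose image meets the order-\(3\) part (\(C_3\) or the \(3\)-cycles of \(\symm\)), I would enumerate the relevant Artin--Schreier extensions of \(K\) by their ramification jump \(u\) coprime to \(3\), compute the wild part of \(\bsym v_V\) as an explicit affine function of \(u\), and sum the resulting geometric series in \(q^{-1}\); the rational coefficients \(\tfrac{l-1}{6}\), \(\tfrac{(l-1)(l+4)}{6}\), and so on should then emerge from the number of admissible jumps and twists multiplied by the common ratio of this series.

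The main obstacle will be the wild part, and especially the non-abelian cases \(G\cong C_2^2\rtimes\symm\) and \(C_l^2\rtimes\symm\). There the wild inertia may surject onto the order-\(3\) quotient while the tame part feels the semidirect action of \(\symm\) on \(C_l^2\), so \(\rho(I)\) need not be abelian and the higher ramification groups must be tracked through this action; moreover \(V\) restricted to such a subgroup is induced rather than a sum of characters, which complicates the wild part of \(\bsym v_V\). I would handle this by decomposing according to the \(\symm\)-orbit type of the tame directions, using the no-pseudo-reflection hypothesis to ensure that every ramified stratum sits in codimension \(\geq 2\) so that no fractional powers of \(q\) survive, and finally assembling the geometric-series sums case by case to match the four stated polynomials.
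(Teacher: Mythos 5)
Your overall architecture --- stratify by the image of inertia, rewrite the mass as \(\frac 1{\sharp G}\sum_{\rho\in\Hom(G_K,G)}q^{3-\bsym v_V(\rho)}\) via orbit-counting, handle tame strata by Kummer theory and ages, wild strata by Artin--Schreier theory, and sum geometric series --- is essentially the paper's route (the paper works directly with the strata \(S_i=\{A\mid H_A\cong\cdot\}\) rather than with homomorphisms, but the orbit--stabilizer identity makes the two bookkeepings equivalent, and your tame observations, e.g.\ that nontrivial tame elements of \(\SL_3\) have age \(1\) or \(2\) and that \(\bsym T\) has age \(1\), are correct). There is, however, one minor error and one genuine gap. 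The minor error: in characteristic \(3\) the wild inertia of \(G_K\) is an \emph{infinitely} generated pro-\(3\) group (already \(\Hom(G_K,C_3)\cong K/\wp(K)\) is infinite), so \(\rho\) is not ``determined by the images of a tame generator, a generator of the wild filtration, and a Frobenius lift.'' This is repairable, since your actual enumeration goes through Artin--Schreier classes, but each wild stratum is infinite and must be organized by the value of \(\bsym v_V\), as the paper does with the sets \(S_{2,m}\), \(S_{7,m,j}\).

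The genuine gap is your treatment of \(\bsym v_V\) on the wild strata. You take for granted that \(\bsym v_V\) splits into ``a tame age part and a wild part governed by the higher ramification filtration,'' but no such general decomposition exists --- the \(v\)-function is not determined by the ramification filtration in general (this is the point of \cite{Yam2}, cited in the introduction) --- and none of the known formulas covers the stratum \(S_7\) of \(\symm\)-extensions, which is exactly where the difficulty of the symmetric-type cases is concentrated. The paper's key step (Lemma \ref{Lem:4.8}) is a bare-hands computation of an \(\mathcal O_K\)-basis of the tuning module \(\Xi^{V}_{L/K}\): one writes \(L\supset Q\supset K\) with \(Q/K\) quadratic Kummer (generator \(\alpha\)) and \(L/Q\) Artin--Schreier, proves one may normalize the Artin--Schreier generator so that \(\beta\cdot\bsym S=\beta+1\) and \(\beta\cdot\bsym T=-\beta\) (itself an argument, using \(RP_Q\cap\wp(K)=\{0\}\)), solves the \(G\)-equivariance conditions to exhibit the basis \(\psi_1,\psi_2,\psi_3\) built from \(\alpha\), \(t^{n_1}\beta\), \(t^{n_2}\alpha\beta^2\), and reads off \(\bsym v_V(A)=v_K(\alpha^2)+n_1+n_2\) from a determinant. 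Your proposed remedy for the non-abelian wild stratum (``decompose according to the \(\symm\)-orbit type of the tame directions'') never produces this computation, and without it the coefficients \(\frac{(l+5)(l+7)}{12}\), \(\frac{(l+1)(l+5)}{12}\) and the polynomial \(q^3+6q^2+q\) cannot be derived. Note also that even on the abelian wild stratum \(H_A\cong C_3\), the identity \(\bsym v_V(A)=d_{L/K}/2=j+1\) is not generic ramification theory: it comes from \cite[Corollary 11.4]{Yas3} together with \cite{Ser}, applied to this particular three-dimensional representation, so the affine-in-the-jump behaviour you posit must be justified case by case there as well.
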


The \textit{Serre-Bhargava mass formula} is represented as
\[
\sum_{A\in S_n\text{-\'Et}(K)}q^{-\bsym a(A)}=\sum_{m=0}^{n-1}P(n,n-m)q^{-m},
\]
where \(\bsym a\) is the Artin conductor of induced representation \(\rho:G_K\to\GL_n(\CC)\) and \(P(n,j)\) is the number of partitions of \(n\) into \(j\) positive integers (see \cite{Bha}). In \cite{WY}, another proof of the formula was given, which uses the stringy-point count of a quotient variety by a symmetric group. The left-hand-side of the mass formula is a weighted count of Galois extensions. Thus the formulas in the main theorem can be regarded as new versions of mass formula.

We prove this theorem by concrete computation. Firstly, we classify the \(G\)-\'etale algebras by the corresponding field extension \(L/k((t))\). For each classes, we compute the value of \(v\)-function \(\bsym v_V\) by compute a basis of the tuning module. We compute the number of \(G\)-\'etale algebras with a prescribed value of \(\bsym v_V\). Then we can compute the sum in the left-hand-side in the formulas. 

An important point of the proof is the computation of \(v\)-functions. The \(v\)-function corresponds to a local hight of some stack, for instance see \cite[Section 3.1]{Ell}. The module resolvent defined by Fr\" ohlich in \cite{Fro} also coincides with \(v\)-function. The \(v\)-function coincides with age of an element of group if the order of the group is not divided by the characteristic of base field. But, if the order of the group is divided by the characteristic, computing the \(v\)-function is difficult. The \(v\)-function is computed in the following cases.
\begin{itemize}
    \item In \cite[Theorem 4.8]{WY}, the \(v\)-function is computed for permutation representations.
    \item In \cite[Corollary 11.4]{Yas3}, the \(v\)-function is computed for the stable hyperplane in permutation representations.
    \item The \(v\)-function is computed for the representation of \(p\)-cyclic group in \cite[Proposition 6.9]{Yas4}, and for the representation of \(p^r\)-cyclic group in \cite[Theorem 3.11]{TY} as generalization.
    \item In \cite[Theorem 5.3]{Yam2}, the \(v\)-function is computed for the \(2\)-dimensional representation of the group \((\ZZ/p\ZZ)^2\). This is the first case that \(v\)-function is not determined by ramification filtration.
\end{itemize}
In the proof of the main theorem of this paper, we compute the \(v\)-function for the representation of symmetric group of degree three in characteristic three.

The stringy-point count gives the number of \(k\)-point of a crepant resolution. By using the Weil conjecture for \(l\)-adic cohomology, we can get the Euler characteristic of crepant resolution from the stringy-point count. Thus we get the following corollary from the main theorem.
\begin{coro}
Let \(G\) be a finite subgroup of \(\SL_3(k)\) without pseudo-reflections. For a crepant resolution \(Y\to \AA^3_k/G\), we have
\[
\chi(Y)=\begin{cases}
3+\frac{l-1}3&(G\cong\ZZ/l\ZZ\rtimes\ZZ/3\ZZ)\\
3+\frac{l^2-1}3&(G\cong(\ZZ/l\ZZ)^2\rtimes\ZZ/3\ZZ)\\
\frac{(l-1)(l-2)}6+2l+4&(G\cong(\ZZ/l\ZZ)^2\rtimes\symm)
\end{cases}
\]
\end{coro}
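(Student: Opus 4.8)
The plan is to deduce $\chi(Y)$ from Theorem~\ref{MTinIntro} by recognizing the right-hand sides as point counts and then specializing the associated zeta function. Write $P(T)$ for the polynomial obtained by replacing $q$ with $T$ in the relevant line of the main theorem. By the point-count form of the stringy McKay correspondence (motivic integration, via the results cited in the introduction), the crepant resolution $Y$ satisfies $\#Y(k)=\stpt(\AA^3_k/G)$, and Theorem~\ref{MTinIntro} identifies the latter with $P(q)$. The target is therefore the clean identity $\chi(Y)=P(1)$, after which the three stated values follow by direct substitution; in particular the third line already absorbs the group $C_2^2\rtimes\symm$ as the $l=2$ specialization of $\tfrac{(l-1)(l-2)}6+2l+4$.

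To pass from a point count to $\chi$ I would use the Weil conjectures. The congruences $q\equiv1\pmod l$ and $q\equiv1\pmod{2l}$ are stable under $q\mapsto q^n$, and $G\subset\SL_3(k)\subset\SL_3(\FF_{q^n})$ retains its structure, so applying Theorem~\ref{MTinIntro} over each $\FF_{q^n}$ to the base change $Y\times_k\FF_{q^n}$ yields $\#Y(\FF_{q^n})=P(q^n)$ for every $n\ge1$. Writing $P(T)=\sum_j a_jT^j$, the Grothendieck--Lefschetz formula gives
\[
Z(Y,t)=\exp\Bigl(\sum_{n\ge1}P(q^n)\frac{t^n}{n}\Bigr)=\prod_j(1-q^jt)^{-a_j}.
\]
Comparing with $Z(Y,t)=\prod_i\det\bigl(1-\mathrm{Frob}\cdot t\mid H^i(Y_{\bar k},\QQ_l)\bigr)^{(-1)^{i+1}}$ and invoking purity, every factor lies in the denominator, so $H^{\mathrm{odd}}(Y_{\bar k},\QQ_l)=0$ and $H^{2j}$ is pure of Tate type with $\dim H^{2j}=a_j$. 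Hence $\chi(Y)=\sum_j a_j=P(1)$, which I would then evaluate line by line.

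The main obstacle is the uniformity step: Theorem~\ref{MTinIntro} is phrased over a single field, so I must confirm its computation is a genuine polynomial identity in $q$, valid over every residue extension meeting the congruence. Concretely this means verifying that the classification of $G$-\'etale $K$-algebras and the values of $\bsym v_V$ are insensitive to enlarging $k$, so that only the explicit $q$-dependence in the counts changes. The remaining input is purely arithmetic and easy, namely that the coefficients $a_j$ are non-negative integers: for $C_l\rtimes C_3$ this uses that an odd prime admitting a nontrivial $C_3$-action has $l\equiv1\pmod6$, for $C_l^2\rtimes C_3$ that $2\mid l-1$ and $3\mid l^2-1$ for every prime $l\ne3$, and for the symmetric case that $\gcd(l,6)=1$ forces $12\mid(l+5)(l+7)$ and $12\mid(l+1)(l+5)$. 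This non-negativity is exactly what licenses reading the $a_j$ off as Betti numbers in the argument above, and with it in hand the evaluation of $P(1)$ in each case produces the three claimed Euler characteristics.
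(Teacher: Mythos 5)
Your overall route coincides with the paper's: identify the right-hand side of Theorem \ref{MTinIntro} with \(\sharp Y(\FF_{q^m})\) for every \(m\geq 1\) (the congruences \(q\equiv 1\ \mathrm{mod}\ l\), resp.\ \(\mathrm{mod}\ 2l\), persist under \(q\mapsto q^m\), so the main theorem applies over each \(\FF_{q^m}\) with the same polynomial \(P\)), form the zeta function, and read off \(\chi(Y)=\sum_j a_j=P(1)\). This is exactly the content of the paper's Proposition in Section 5 combined with \(\sharp Y(\FF_q)=\stpt(V/G)\); your explicit treatment of the uniformity in \(q\) is a point the paper leaves implicit, and your remark that the group \(C_2^2\rtimes\symm\) is absorbed as the \(l=2\) specialization of the third line checks out (\(P(1)=1+6+1=8\) on both sides).

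The one genuine flaw is the purity step. The variety \(Y\) is a crepant resolution of the affine quotient \(\AA^3_k/G\), so it is smooth but \emph{not proper} over \(k\). Deligne's purity theorem therefore does not apply: for an open variety the Frobenius eigenvalues on \(H^i_c(Y_{\bar k})\) are only mixed of weights \(\leq i\), and from the shape of \(Z(Y,t)\) alone one cannot conclude that odd cohomology vanishes or that \(H^{2j}\) is Tate of dimension \(a_j\), since cancellations in the alternating product cannot be excluded this way; moreover for non-proper \(Y\) the cohomological expression of \(Z\) must be written with compactly supported cohomology in the first place. Fortunately this detour, and with it your closing claim that non-negativity of the \(a_j\) is what "licenses" the argument, is unnecessary: comparing degrees of rational functions in \(Z(Y,t)=\prod_i\det\bigl(1-\mathrm{Frob}\cdot t\mid H^i_c\bigr)^{(-1)^{i+1}}=\prod_j(1-q^jt)^{-a_j}\) gives \(-\chi_c(Y)=-\sum_j a_j\) with no purity or positivity input, hence \(\chi(Y)=\sum_j a_j=P(1)\), where \(\chi\) is by the paper's definition the compactly supported Euler characteristic. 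This degree count is precisely how the paper's proposition concludes, so excising the purity paragraph leaves you with the paper's proof.
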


The corollary supports the result of \cite{Yam} for the Euler characteristic. 

The outline of the paper is as follows. In section two, we explain notations and conventions of this paper. In section two, we give a some notations and conventions used through this paper. In section three, we give several facts for the stringy-point count or the stringy-motive, which is an invariant regarded as a generalization of the stringy-point count. In section four, we prove the main theorem by a case-by-case analysis. The key of the proof is the computation of \(v\)-function especially for symmetric extension. The computation of \(v\)-function is generally difficult, but we can in this cases.
\subsection*{Acknowledgements}
I am grateful to Takehiko Yasuda for teaching me and for his valuable comments. This work was partially supported by JSPS KAKENHI Grant Numbers JP18H01112, JP21H04994.

\section{Notation and Convention}
Throughout this paper, \(k\) denotes the finite field of order \(q\), where \(q\) is a power of a prime \(p\), and \(K\) denotes the Laurent power series field \(k((t))\). The valuation ring of \(K\) with respect to the normalized valuation \(v_K\) is denoted by \(\mathcal O_K\). Note that \(\mathcal O_K\) is the ring \(k[[t]]\) of formal power series. For any field \(F\) of characteristic \(p\), we define
\[
\wp:F\ni x\mapsto x^p-x\in F.
\]
Note that \(\wp\) is additive because the characteristic of \(F\) is \(p\).
 
The cyclic group \(\ZZ/n\ZZ\) of order \(n\) is denoted by \(C_n\) for simplicity. The symmetric group of degree three is denoted by \(\symm\). A \textit{tame} group means a finite group whose order is not divided by the characteristic of the base field. Finite groups which are not tame are called \textit{wild} groups.

For a subgroup \(G\) of \(\SL_d(k)\), we say \(G\) is \textit{small} if \(G\) has no pseudo-reflections, equivalently any element \(g\) of \(G\) satysfies \(\dim_k(k^d)^g\ne 1\). We say \(G\) is \textit{of cyclic type} if \(G\) isomorphic to a semidirect product \(H\rtimes C_3\) where \(H\) is a tame abelian group. If \(G\) isomorphic to a semidirect product \(G\cong H\rtimes\symm\) where \(H\) is a tame abelian group, we say \(G\) is \textit{of symmetric type}.


\section{Stringy motive and stringy-point count}
In this section, we give some facts of the stringy-point count. For an \(\mathcal O_K\)-variety \(X\), that means \(X\) is a reduced \(\QQ\)-Gorenstein quasi-projective \(\mathcal O_K\)-scheme such that flat, of finite type, equi-dimensional over \(\mathcal O_K\), and the structure morphism \(X\to\Spec\mathcal O_K\) is smooth on an open dence subscheme of \(X\), we denote its \textit{stringy point-count} by \(\stpt X\), which is an invariant defined as the volume of \(X\) with respect to a certain \(p\)-adic measure. For more detail see \cite{Yas}. A crepant resolution means a proper birational morphism \(Y\to X\) such that \(Y\) is smooth over \(\mathcal O_K\) and the relative canonical divisor \(K_{Y/X}\) is zero. For a crepant resolution \(Y\to X\), we have
\[
\sharp Y(k)=\sharp_{st}X.
\]
Thus we can use the stringy point-count for counting the \(k\)-points on a crepant resolution.

For a finite group \(G\), a finite \'etale \(K\)-algebras \(A\) of degree \(\sharp G\) endowed with a \(G\)-action and satisfying \(A^G=K\) is called a \(G\)\textit{-\'etale \(K\)-algebra}. An \textit{isomorphism} of \(G\)-\'etale \(K\)-algebras is a \(G\)-equivaliant isomorphisms of \(K\)-algebras. We denote the set of the isomorphic classes of \(G\)-\'etale \(K\)-algebras by \(\etset(K)\). Note that each \(A\in\etset(K)\) is written as
\[
A=L_1\oplus L_2\oplus\cdots\oplus L_m,
\]
where \(\{L_i\}_{i=1}^m\) are copies of a Galois extension \(L/K\). Let \(H_A\subset G\) be the stabilizer of \(L_1\). Then \(H_A\) is isomorphic to the Galois group of \(L/K\). 

Let \(V=\AA_{\mathcal O_K}^n\) be an \(n\)-dimensional linear representation of \(G\) and let \(\mathcal O_K[x_1,\ldots,x_n]\) be the coordinate ring of \(V\). Then the linear part \(M=\sum_{i=1}^n\mathcal O_Kx_i\) of \(\mathcal O_K[x_1,\ldots,x_n]\) is the free \(\mathcal O_K\)-module of degree \(n\) endowed with \(G\)-action.

For \(A\in\etset(K)\), we define the \textit{tuning module} \(\Xi_{A/K}^V\) of \(A\) by
\[
\Xi_{A/K}^V=\Hom_{\mathcal O_K}^G(M,\mathcal O_A),
\]
where \(\mathcal O_A\) is the integer ring of \(A\) over \(\mathcal O_K\) and \(\Hom_{\mathcal O_K}^G(M,\mathcal O_A)\) is the set of all the \(G\)-equivariant \(\mathcal O_K\)-linear morphisms \(M\to\mathcal O_A\).
Then the \(v\)-function \(\bsym v_V\) on \(\etset(K)\) is defined by
\[
\bsym v_V(A)=\frac 1{\sharp G}\mathrm{length}_{\mathcal O_K}\frac{\Hom_{\mathcal O_K}(M,\mathcal O_A)}{\mathcal O_A\cdot\Xi_{A/K}^V}.
\]
The \(v\)-function satisfies that
\[
\bsym v_V(A)=\bsym v_{V|_{H_A}}(L),
\]
where \(V|_{H_A}\) is a representation given by regarding \(V\) as a representation of \(H_A\).

Note that, the \(v\)-function corresponds to the \textit{age} of an element of \(H_A\) when \(H_A\) is tame. For \(h\in\GL_d(K)\) of order \(l\) which is not divided by \(p\), the eigenvalues of \(h\) is written as \(\zeta^{a_1},\zeta^{a_2},\ldots,\zeta^{a_d}\), where \(\zeta\in K\) is the primitive \(l\)-th root of unity and \(0\leq a_i<l\) for any \(i\). Then the age of \(h\) is defined by
\[
\age(h)=\frac 1l\sum_{i=1}^da_i.
\]
\begin{lemm}[\cite{WY}, Lemma 4.3]\label{lem:tameVFct}
Take a \(G\)-\'etale \(K\)-algebra \(A=L^{\oplus n}\). Suppose that \(H_A\) is tame and the ramification index of \(L/K\) is \(e\). Let \(\zeta\in K\) be a primitive \(e\)-th root of unity. Let \(g\in H_A\) be an element which acts on \(L\) by
\[
g(\pi)=\zeta\pi
\]
for a uniformizer \(\pi\) of \(L\). Then
\[
\bsym v_V(A)=\age(g).
\]
\end{lemm}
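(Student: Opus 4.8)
The plan is to reduce to the case of a single field and then compute the tuning module explicitly using the eigenbasis of the inertia generator $g$. First I would invoke the restriction formula $\bsym v_V(A)=\bsym v_{V|_{H_A}}(L)$ recorded above, which lets me replace $G$ by $H_A$ and $A=L^{\oplus n}$ by the single Galois extension $L$. From now on the group is $H_A$, whose order is $ef$, where $e$ is the ramification index and $f=[\ell:k]$ the residue degree of $L/K$; in particular $\sharp H_A=ef$. Note that $\zeta\in K$ forces $e\mid q-1$, and that $g$, acting by $g(\pi)=\zeta\pi$, has order exactly $e$ and generates the inertia subgroup $I$.

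Next I would exploit tameness to linearize everything. Let $W=\mathcal{O}_L^{I}$ be the valuation ring of the maximal unramified subextension, so that $\mathcal{O}_L=\bigoplus_{j=0}^{e-1}W\pi^j$ and $g$ acts by $g(w\pi^j)=\zeta^{j}w\pi^j$, since $g$ fixes $W$ and scales $\pi$. Because $\zeta\in k$ and $e$ is invertible in $k$, the relation $g^e=1$ makes $g$ diagonalizable on the free $\mathcal{O}_K$-module $M$; write $M=\bigoplus_i\mathcal{O}_K e_i$ with $g e_i=\zeta^{a_i}e_i$ and $0\le a_i<e$, so that the multiset $\{a_i\}$ is precisely the one computing $\age(g)=\frac1e\sum_i a_i$. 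A direct computation then identifies the $\zeta^{a}$-eigenspace of $g$ inside $\mathcal{O}_L$ with $\pi^{a}W$.

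Now I would compute the tuning module. Imposing $g$-equivariance alone, a map $\phi$ must send $e_i$ into the $\zeta^{a_i}$-eigenspace, i.e. $\phi(e_i)\in\pi^{a_i}W$; hence the module $\Xi^{I}=\Hom^{I}_{\mathcal{O}_K}(M,\mathcal{O}_L)$ of $I$-equivariant maps is $\bigoplus_i \pi^{a_i}W\,e_i^{\vee}$, and its $\mathcal{O}_L$-span is $\bigoplus_i \pi^{a_i}\mathcal{O}_L\,e_i^{\vee}$. The quotient of $\Hom_{\mathcal{O}_K}(M,\mathcal{O}_L)=\bigoplus_i \mathcal{O}_L\,e_i^{\vee}$ by this $\mathcal{O}_L$-module is $\bigoplus_i \mathcal{O}_L/\pi^{a_i}\mathcal{O}_L$, whose length over $\mathcal{O}_K$ is $f\sum_i a_i$. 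Dividing by $\sharp H_A=ef$ yields $\frac1e\sum_i a_i=\age(g)$, as desired.

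The step that needs care, and the main obstacle, is that the genuine tuning module $\Xi=\Hom^{H_A}_{\mathcal{O}_K}(M,\mathcal{O}_L)$ uses full $H_A$-equivariance rather than merely $I$-equivariance, so a priori $\Xi$ is strictly smaller than $\Xi^{I}$. The key point to establish is that passing to the $\mathcal{O}_L$-span erases this difference, that is $\mathcal{O}_L\cdot\Xi=\mathcal{O}_L\cdot\Xi^{I}$. I would deduce this from Galois descent for the unramified part: the extension $W/\mathcal{O}_K$ is étale and $\mathrm{Gal}(\ell/k)$-Galois, the module $\Xi^{I}$ carries the induced semilinear action of $H_A/I\cong\mathrm{Gal}(\ell/k)$, and $\Xi=(\Xi^{I})^{H_A/I}$, so descent gives $W\otimes_{\mathcal{O}_K}\Xi\xrightarrow{\sim}\Xi^{I}$ and hence $\mathcal{O}_L\cdot\Xi=\mathcal{O}_L\cdot\Xi^{I}$. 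When $L/K$ is totally ramified ($f=1$) this is vacuous and the computation above is immediate; in general the residue degree $f$ appearing in the length cancels against the $f$ in $\sharp H_A$, which is exactly why only $\age(g)$, depending on the inertia generator $g$ alone, survives.
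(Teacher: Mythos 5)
The paper itself gives no proof of this lemma: it is imported verbatim from \cite{WY}, Lemma 4.3 and used as a black box, so there is no in-paper argument to compare against. Your proof is a correct, self-contained reconstruction of the underlying computation. The reduction to a connected component via \(\bsym v_V(A)=\bsym v_{V|_{H_A}}(L)\), the decomposition \(\mathcal O_L=\bigoplus_{j=0}^{e-1}W\pi^j\) (valid because \(g(\pi^e)=\pi^e\) puts \(\pi^e\) in \(L^I\) as a uniformizer of \(W\), and \(L/L^I\) is then Eisenstein), the identification of the \(\zeta^a\)-eigenspace of \(g\) in \(\mathcal O_L\) with \(\pi^aW\), and the resulting length count \(f\sum_i a_i\) against \(\sharp H_A=ef\) are all sound. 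You also correctly isolate the only delicate point — that the true tuning module \(\Xi\) imposes full \(H_A\)-equivariance, not just \(I\)-equivariance — and your resolution by Galois descent along the finite \'etale extension \(W/\mathcal O_K\) (so that \(W\otimes_{\mathcal O_K}\Xi\cong\Xi^I\), hence \(\mathcal O_L\cdot\Xi=\mathcal O_L\cdot\Xi^I\)) is valid; this is exactly why only the inertia generator survives in the answer.

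One caveat concerns the hypothesis rather than the body of your argument. You claim that \(g(\pi)=\zeta\pi\) \emph{forces} \(g\) to have order \(e\) and to generate the inertia group \(I\). It does not: for an unramified quadratic extension one has \(e=1\), \(\zeta=1\), and the Frobenius fixes the uniformizer \(t\) of \(K\), yet generates \(H_A\cong C_2\); taking \(H_A\) generated by \(\diag(-1,-1,1)\in\SL_3(k)\), the lemma as literally stated would give \(\bsym v_V(A)=\age(\mathrm{Frob})=1\), contradicting \(\bsym v_V(A)=0\) for unramified algebras. So the statement is only correct when \(g\) is read as a generator of \(I\), normalized against \(\zeta\) by \(g(\pi)=\zeta\pi\) — which is how this paper actually applies it (e.g.\ \(h_2\) in the \(S_4\) case, and the powers \(h^{v_K(\alpha^{2l})}\) in Lemma \ref{lem:4.7}) and how \cite{WY} states it. Your proof proper assumes exactly this reading, so it stands; just record ``\(g\) generates \(I\)'' as a hypothesis rather than a consequence.
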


The stringy-point count is equal to a weighted sum of \(G\)-\'etale \(K\)-algebras.
\begin{thm}[{\cite[Theorem 7.4, Proposition 8.5]{Yas}}]\label{strPtCt}
Let \(\sharp k=q\). If \(V=\AA^n_{\mathcal O_K}\) is a linear representation of a finite group \(G\). If \(V/G\) is \'etale in codimension one, then we have 
\[
\stpt (V/G)=\sum_{A\in\etset(K)}\frac{q^{n-\bsym v_V(A)}}{\sharp C_G(H_A)}.
\]
\end{thm}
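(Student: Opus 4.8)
The plan is to reconstruct the wild McKay correspondence of \cite{Yas} by realizing \(\stpt(V/G)\) as a motivic integral over a space of twisted arcs and then evaluating that integral stratum by stratum. First I would recall that for an \(\mathcal O_K\)-variety \(X\) the stringy invariant is defined as a \(p\)-adic (motivic) volume of the arc space \(J_\infty X\) against the measure twisted by the order of the canonical divisor; the hypothesis that \(V/G\) is \'etale in codimension one guarantees that \(K_{V/G}\) is \(\QQ\)-Cartier and that the ramification locus has codimension at least two, so this volume is well defined and \(V/G\) is a legitimate \(\mathcal O_K\)-variety in the sense of this section. Since \(k\) is finite, the motivic measure specializes to honest point-counting under \(\mathbb L\mapsto q\), so it suffices to compute the corresponding integral over \(k\).

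Next I would replace the arc space of the singular quotient \(X=V/G\) by the space of twisted arcs of the quotient stack \([V/G]\). The key structural input is that a twisted arc of \([V/G]\) is the same datum as a \(G\)-equivariant arc \(\Spec\mathcal O_A\to V\), where \(\Spec\mathcal O_A\to\Spec\mathcal O_K\) is a \(G\)-cover of the formal disk; restricting to the punctured disk \(\Spec K\), such a cover is exactly a \(G\)-\'etale \(K\)-algebra \(A\in\etset(K)\). This produces a stratification of the twisted-arc space indexed by \(A\): the stratum over a fixed \(A\) parametrizes \(G\)-equivariant \(\mathcal O_K\)-linear maps \(M\to\mathcal O_A\), i.e.\ integral points of the tuning module \(\Xi^V_{A/K}\), so its fiberwise structure is governed by \(\Xi^V_{A/K}\) and hence by \(\bsym v_V(A)\).

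Then I would compute the contribution of each stratum. The passage from the arc space of \(X\) to the twisted-arc space of \([V/G]\) is governed by a change-of-variables formula, and by construction the resulting Jacobian (order shift) is precisely \(\bsym v_V(A)\), since the \(v\)-function is defined as \(\tfrac1{\sharp G}\) times the \(\mathcal O_K\)-length of \(\Hom_{\mathcal O_K}(M,\mathcal O_A)/(\mathcal O_A\cdot\Xi^V_{A/K})\), which measures exactly this defect. The ambient dimension of \(V\) contributes the factor \(q^n\), so the measure of the \(A\)-stratum is \(q^{n-\bsym v_V(A)}\). Finally, since \(\etset(K)\) records \emph{isomorphism} classes, passing from the groupoid of \(G\)-\'etale algebras to its set of isomorphism classes introduces the reciprocal of the order of the automorphism group of \(A\); under the dictionary between \(G\)-\'etale \(K\)-algebras and conjugacy classes of homomorphisms \(\mathrm{Gal}(\bar K/K)\to G\), with \(H_A\) the image, this automorphism group is the centralizer \(C_G(H_A)\), yielding the weight \(1/\sharp C_G(H_A)\) and hence the stated formula. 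Lemma \ref{lem:tameVFct} then serves as the consistency check that in the tame case \(\bsym v_V(A)\) reduces to the age.

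The main obstacle is the change-of-variables computation identifying the twisting Jacobian with the \(v\)-function in the presence of \emph{wild} ramification: when \(p\mid\sharp G\) the cover \(\mathcal O_A/\mathcal O_K\) is wildly ramified, \(\mathcal O_A\) is not \'etale over \(\mathcal O_K\), and one cannot read the shift off eigenvalues as in the tame/age case. Making the order function measurable, establishing convergence of the sum over the infinitely many wild \(G\)-\'etale algebras (which relies on \(\bsym v_V\) growing within each fixed stabilizer type), and showing that the length defining \(\bsym v_V\) is exactly the exponent produced by integration, is the technical heart; this is where Theorem 7.4 and Proposition 8.5 of \cite{Yas} do the real work, and the role of the tuning module is precisely to package this wild Jacobian into a computable form.
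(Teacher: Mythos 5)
This theorem is quoted by the paper directly from \cite{Yas} (Theorem 7.4 and Proposition 8.5); the paper contains no proof of it, so there is no internal argument to compare yours against. Your sketch is a faithful outline of the strategy of the cited proof --- untwisting arcs of \(V/G\) to \(G\)-equivariant arcs on \(V\) indexed by \(G\)-\'etale \(K\)-algebras, identifying the change-of-variables shift with \(\bsym v_V\) via the tuning module, and weighting each isomorphism class by \(1/\sharp C_G(H_A)\) as the automorphism group of the corresponding torsor --- and you correctly locate the technical heart (measurability, convergence, and the wild Jacobian computation) in exactly the results of \cite{Yas} that the paper cites rather than in anything the paper itself supplies.
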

This is one of realizations of the following equation called the \textit{wild McKay correspondence}. The wild McKay correspondence is a equation between \textit{stringy motive} and an integration on a moduli space of \(G\)-torsors over \(\Spec k((t))\), denoted by \(\Delta_G\). The stringy motive is defined as a motivic integration on arc space. For a variety \(X\), we denote the stringy motive of \(X\) by \(M_{st}(X)\).
\begin{thm}[{\cite[Corollary 14.4]{Yas2}}]
Let \(V,G\) be same as in Theorem \ref{strPtCt}. Let \(X\) be a quotient variety \(X=V/G\). Then we have
\[
M_{st}(X)=\int_{\Delta_G}\mathbb L^{d-\bsym v_V}.
\]
\end{thm}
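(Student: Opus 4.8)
The plan is to realize both sides as motivic integrals and match them through a single change of variables, reducing the stringy motive of the quotient to an integral over the space of torsors. Since $V/G$ is \'etale in codimension one, $M_{st}(X)$ is computed by motivic integration on the arc space of $X$ and is insensitive to the quotient singularities outside a codimension-two locus; equivalently, one may compute it on the quotient stack $\mathcal X=[V/G]$, whose coarse space is $X$ and whose canonical sheaf differs from that of $X$ only by the ramification data we are about to track. Here $d=\dim V$.

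The heart of the argument is a fibration of the (twisted) arc space over $\Delta_G$. An arc $\Spec\mathcal O_K\to X$ lifts, away from the branch locus, to a twisted arc of $\mathcal X$, and such a twisted arc is the same datum as a $G$-torsor over the punctured disk $\Spec K$ together with a $G$-equivariant $\mathcal O_K$-point of $V$ valued in the associated $G$-\'etale algebra. Grouping arcs by their underlying torsor therefore gives a map from the arc space of $X$ to $\Delta_G$ whose fiber over the class of $A\in\etset(K)$ is essentially the tuning module $\Xi^V_{A/K}=\Hom^G_{\mathcal O_K}(M,\mathcal O_A)$. I would then apply the transformation rule for motivic integrals to this map: the Jacobian of passing from arcs on the coarse space to $G$-equivariant arcs on $V$ is exactly the index measured by $\bsym v_V$, so the fiberwise contribution is $\mathbb L^{d-\bsym v_V(A)}$, with $d$ coming from the ambient dimension and $-\bsym v_V(A)$ from the length of $\Hom_{\mathcal O_K}(M,\mathcal O_A)/(\mathcal O_A\cdot\Xi^V_{A/K})$. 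Integrating fiberwise and then over $\Delta_G$ produces $\int_{\Delta_G}\mathbb L^{d-\bsym v_V}$. As a consistency check I would specialize to the tame case, where $\Delta_G$ breaks into finitely many pieces indexed by conjugacy classes and $\bsym v_V$ reduces to the age by Lemma \ref{lem:tameVFct}, recovering the classical Denef--Loeser/Batyrev formula; passing to point counts via $\mathbb L\mapsto q$ must moreover reproduce Theorem \ref{strPtCt}, which pins down the normalization of the measure on $\Delta_G$.

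The main obstacle is the wild ramification. Unlike the tame case, $\Delta_G$ is genuinely infinite-dimensional and carries infinitely many torsor types, so the statement only makes sense once $\Delta_G$ is equipped with a motivic (or $p$-adic) measure for which $\bsym v_V$ is measurable and the integral converges; constructing this measure and proving convergence is the technical core. Equally delicate is the exact computation of the Jacobian in the wild setting: the clean identification of the discrepancy with the age is no longer available, and one must instead extract $\bsym v_V(A)$ from the tuning module $\Xi^V_{A/K}$, which requires controlling the integral structure $\mathcal O_A$ of wildly ramified extensions. Justifying that the change of variables respects these integral structures uniformly along the fibration is where the bulk of the work lies.
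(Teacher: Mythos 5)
This statement is not proved in the paper at all: it is imported verbatim as background, with the proof deferred entirely to the citation \cite[Corollary 14.4]{Yas2}, so there is no in-paper argument to compare yours against. That said, your outline does track the strategy of the cited work: interpret \(M_{st}(V/G)\) through the quotient stack \([V/G]\), identify twisted arcs with pairs consisting of a \(G\)-torsor over \(\Spec K\) and an equivariant arc valued in the associated \(G\)-\'etale algebra, fiber this space over \(\Delta_G\), and extract \(\mathbb L^{d-\bsym v_V}\) as the Jacobian of an untwisting/change-of-variables, computed via the tuning module \(\Xi^V_{A/K}\).

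However, as a proof your text has genuine gaps, and you name them yourself without closing them: the construction of a motivic measure on \(\Delta_G\) for which \(\bsym v_V\) is measurable, the convergence question for \(\int_{\Delta_G}\mathbb L^{d-\bsym v_V}\) (in the wild case \(\bsym v_V\) is unbounded and \(\Delta_G\) has infinitely many strata; in Yasuda's formulation both sides may be infinite and part of the theorem is that they diverge together), and the untwisting theorem identifying the Jacobian with \(\mathbb L^{-\bsym v_V}\) while respecting the integral structures \(\mathcal O_A\) of wildly ramified extensions. These are not routine verifications to be "applied"; they constitute essentially the entire content of \cite{Yas2} and its predecessor \cite{Yas}. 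Two smaller inaccuracies: the hypothesis that \(V/G\) is \'etale in codimension one is not about insensitivity to singularities in codimension two — it is what guarantees the absence of a ramification-divisor correction, so that the exponent is exactly \(d-\bsym v_V\) rather than involving a shift; and the consistency checks you propose (tame specialization to the age via Lemma \ref{lem:tameVFct}, point-counting specialization to Theorem \ref{strPtCt}) can pin down a normalization only after the measure exists, so they cannot substitute for its construction. In short, your proposal is a correct road map of the known proof, but it is a road map, not a proof.
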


This is generalization of the motivic McKay correspondence in characteristic zero proved by Batyref \cite{Bat} and Denef-Loeser \cite{DL}.

\section{Main theorem}
In this section, we set \(p=3\). Namely, the base field \(k\) is a finite field of order \(q=3^r\). We give a proof of the following theorem.
\begin{thm}[Main theorem]\label{MT}
Let \(G\subset\SL_3(k)\) be a small finite group and let \(V\) be the natural representation of \(G\). Let \(l\ne 3\) be a prime.

Suppose that \(l\) satisfies \(q-1\in l\ZZ\). Then we get the following formulas for the case that \(G\) is of cyclic type:
\[
\sum_{A\in\etset(K)}\frac{q^{3-\bsym v_V(A)}}{\sharp C_G(H_A)}=\left\{\begin{array}{cc}
q^3+(2+\frac{l-1}6)q^2+\frac{l-1}6q&(G\cong C_l\rtimes C_3)\\
q^3+(2+\frac{(l-1)(l+4)}6)q^2+\frac{(l-1)(l-2)}6q&(G\cong C_l^2\rtimes C_3)
\end{array}\right.
\] 

On the other hand, when we suppose that \(l\) satisfies \(q-1\in 2l\ZZ\), we get  the following formulas for the case that \(G\) is of symmetric type:
\[
\sum_{A\in\etset(K)}\frac{q^{3-\bsym v_V(A)}}{\sharp C_G(H_A)}=\left\{\begin{array}{cc}
q^3+6q^2+q&(G\cong C_2^2\rtimes\symm)\\
q^2+\frac{(l+5)(l+7)}{12}q^2+\frac{(l+1)(l+5)}{12}q&(l\ne 2,\ G\cong C_l^2\rtimes\symm)
\end{array}\right.
\]
\end{thm}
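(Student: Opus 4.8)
The plan is to compute the right-hand side of the formula in Theorem \ref{strPtCt} directly, after reorganizing the weighted sum over \(\etset(K)\) as an unweighted sum over continuous homomorphisms. Since an isomorphism class \(A\in\etset(K)\) corresponds to a \(G\)-conjugacy class of continuous homomorphism \(\rho\colon G_K\to G\) (with \(G_K\) the absolute Galois group of \(K=k((t))\)), and the conjugacy class of \(\rho\) has \(\sharp G/\sharp C_G(H_A)\) elements with \(H_A=\mathrm{Im}(\rho)\), I would first establish the identity
\[
\sum_{A\in\etset(K)}\frac{q^{3-\bsym v_V(A)}}{\sharp C_G(H_A)}=\frac1{\sharp G}\sum_{\rho\colon G_K\to G}q^{3-\bsym v_V(\rho)}.
\]
Then I would enumerate the \(\rho\) using the structure of \(G_K\) for the local field \(K\): its tame quotient is topologically generated by a Frobenius \(\sigma\) and a tame inertia generator \(\tau\) with \(\sigma\tau\sigma^{-1}=\tau^q\), while wild inertia is a pro-\(3\) group. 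This lets me stratify the homomorphisms by the image of inertia into the unramified, tamely ramified, and wildly ramified cases, and treat each stratum separately via \(\bsym v_V(\rho)=\bsym v_{V|_{H_A}}(L)\).

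For the unramified stratum, \(\rho\) factors through \(\langle\sigma\rangle\) and is determined by \(\rho(\sigma)\in G\); here inertia is trivial, so \(\bsym v_V=0\) and these \(\sharp G\) homomorphisms contribute exactly \(q^3\). For the tamely ramified stratum, the image of inertia is cyclic of order prime to \(3\), hence lies in the tame abelian part \(H\) (and, in the symmetric case, may involve the \(C_2\subset\symm\)). Here I would invoke Lemma \ref{lem:tameVFct} to replace \(\bsym v_V\) by \(\age\), compute the age of each relevant element acting on \(V\) — organized by the orbits of the cyclic permutation of the three eigenlines induced by the \(C_3\) factor — and count the homomorphisms realizing each age. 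This is where the hypothesis \(q-1\in l\ZZ\) (resp. \(q-1\in 2l\ZZ\)) enters decisively: it guarantees \(\mu_l\subset k\) (resp. \(\mu_{2l}\subset k\)), so that the relation \(\sigma\tau\sigma^{-1}=\tau^q\) forces \(\rho(\sigma)\) into the centralizer of \(\rho(\tau)\) and makes the counts clean, and it simultaneously rules out homomorphisms whose image combines both the tame and the wild parts into a nonabelian extension, since such an image would require \(q\equiv r\not\equiv1\pmod l\) for the nontrivial cube root \(r\) of unity modulo \(l\).

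The wildly ramified stratum is the heart of the computation. The image of inertia now surjects onto a \(3\)-Sylow subgroup, and the corresponding extensions are built from Artin--Schreier \(C_3\)-extensions, classified by \(K/\wp(K)\) and graded by the ramification break \(j\) (coprime to \(3\)). The number \(N_j\) of homomorphisms with a given break is a power of \(q\) determined by \(\#\{1\le m\le j:3\nmid m\}\), so the stratum contributes a geometric-type series \(\sum_j N_j\,q^{3-\bsym v_V(j)}\); the crucial input is the value of the \(v\)-function on each break. For the cyclic type the relevant subextension is a \(C_3\)-extension, and I would import the known computation of the \(v\)-function for \(p\)-cyclic groups (\cite[Proposition 6.9]{Yas4}, \cite[Theorem 3.11]{TY}), which for this \(3\)-dimensional regular-unipotent representation gives \(\bsym v_V(j)=j+1\); the resulting series then telescopes to the constant contribution \(2q^2\), independent of \(l\). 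For the symmetric type one must instead compute \(\bsym v_V\) for the totally ramified \(\symm\)-extensions, whose inertia is \(\symm=C_3\rtimes C_2\) with the \(C_2\) acting on the Artin--Schreier class by negation. The hard part will be exactly this new computation: I would realize \(\mathcal O_L\) explicitly as an \(\mathcal O_K[\symm]\)-module by choosing an integral basis adapted to the wild \(C_3\)-subextension \(L^{C_2}\) and its tame quadratic twist, then exhibit generators of the tuning module \(\Xi_{A/K}^V=\Hom^{G}_{\mathcal O_K}(M,\mathcal O_A)\) and evaluate \(\mathrm{length}_{\mathcal O_K}\bigl(\Hom_{\mathcal O_K}(M,\mathcal O_A)/\mathcal O_A\cdot\Xi_{A/K}^V\bigr)\) by computing the determinant of the associated change of basis.

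Once \(\bsym v_V\) is known on every stratum, I would assemble the answer: multiply each stratum's homomorphism count by \(q^{3-\bsym v_V}\), sum the wild geometric series — which converges to a polynomial precisely because \(p=3\) makes the break-wise growth of \(N_j\) match the slope of \(\bsym v_V\) — add the unramified \(q^3\) and the tame contributions, and divide by \(\sharp G\). The four closed forms in the statement should then emerge after collecting the \(q^3\), \(q^2\) and \(q\) coefficients. The main obstacle throughout is the tuning-module computation of the \(v\)-function for the \(\symm\)-representation in characteristic three; every other step reduces to bookkeeping in local Galois theory and to summing geometric series.
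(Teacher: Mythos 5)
Your proposal follows essentially the same route as the paper's proof: both stratify the sum by the Galois group \(H_A\) (equivalently, by ramification type), dispose of the unramified and tame strata via Lemma \ref{lem:tameVFct} and the counting of generators up to conjugacy, handle the wild \(C_3\)-part by Artin--Schreier theory with \(\bsym v_V=j+1\), and isolate as the decisive new ingredient the explicit tuning-module computation of \(\bsym v_V\) on \(\symm\)-extensions, which is exactly the paper's Lemma \ref{Lem:4.8}. The only differences are cosmetic: you pass to an unweighted sum over homomorphisms \(G_K\to G\) rather than the weighted sum over isomorphism classes (an equivalent reformulation, since each class corresponds to a conjugation orbit of size \(\sharp G/\sharp C_G(H_A)\)), and you cite \cite{Yas4}/\cite{TY} where the paper uses \cite{Yas3} and \cite{Ser} for the \(C_3\) case.
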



\subsection{The case that \textit{G} is cyclic type}
Firstly, we consider the case that \(G\) is a small subgroup of \(\SL_3(k)\) isomorphic to \(C_l\rtimes C_3\) where \(l\) is a prime different from three. From \cite[Lemma 3.1]{Yam}, we may assume that any elements of \(C_l\subset G\) are diagonalized and \(C_3\subset G\) is generated by
\[
\bsym S=\begin{bmatrix}
0&1&0\\
0&0&1\\
1&0&0
\end{bmatrix}.
\]

In this case, the set \(\etset(K)\) is divided into three parts:
\begin{gather*}
S_1=\{A\in\etset(K)\mid H_A\text{ is trivial}\},\\
S_2=\{A\in\etset(K)\mid H_A\cong C_3\},\\
S_3=\{A\in\etset(K)\mid H_A\cong C_l\},
\end{gather*}
where \(H_A\) denotes the stable subset of a connected component of \(A\) as in previous section.

If \(A\in S_1\), then \(A=K^{\oplus 3l}\). As \(A\) is unramified, we have \(\bsym v_V(A)=0\) from \cite[Lemma 3.4]{WY}. Obviously, \(C_G(H_A)=G\). Thus we get
\begin{align}
    \sum_{A\in S_1}\frac{q^{3-\bsym v_V(A)}}{\sharp C_G(H_A)}=\frac 1{3l}q^3.\label{eq:4.1.1}
\end{align}

If \(A\in S_2\), then \(A=L^{\oplus l}\) where \(L\) is a \(C_3\)-extension over \(K\). By the Artin-Schreier theory, there exist \(a\in K/\wp(K)\) such that \(K[x]/(x^3-x-a)\cong L\). This implies that there exists \(\alpha\in L\) uniquely such that \(\alpha\) generates \(L\) as \(K\)-algebra, \(\alpha\) satisfies
\[
\alpha^3-\alpha\in k/\wp(k)\oplus \bigoplus_{3\nmid j>0}kt^{-j},
\]
and \(C_3\) acts on \(L\) by \(\alpha\cdot\bsym S=\alpha+1\). Here, we fix a representative for each element of the quotient \(k/\wp (k)\), and we denote the set of the representatives also by \(k/\wp(k)\). Hence the set \(k/\wp(k)\oplus \bigoplus_{3\nmid j>0}kt^{-j}\) is a set of representatives of \(K/\wp(K)\). We set \(a=\alpha^3-\alpha\). Let \(j=-v_K(a)\). If \(j=0\) then \(A\) is unramified and we get \(\bsym v_V(A)=0\) from \cite[Lemma 3.4]{WY}. When \(j>0\), from \cite[Lemma 11.1]{Yas3} and \cite[Proposition 7 in page 50]{Ser}, we get
\[
\bsym v_V(A)=\frac{d_{L/K}}2=j+1.
\]
We set
\[
S_{2,m}=\{A\in S_2\mid\bsym v_V(A)=m\}.
\]
If \(3\nmid (m-1)>0\), then we have
\[
\sharp S_{2,0}=\sharp((k/\wp(k))\backslash\{0\})=2,
\]
and
\begin{align*}
\sharp S_{2,m}&=\sharp\left(k/\wp(k)\oplus\bigoplus_{0<j<m-1, 3\nmid j}kt^{-j}\oplus k^\times t^{-m}\right)\\
&=3q^{m-\floor{\frac{m-1}3}-2}(q-1).
\end{align*}
As \(l\) is not divided by three, \(C_G(C_3)=C_3\). Therefore, we get

\begin{align}
    \sum_{A\in S_2}\frac{q^{3-\bsym v_V(A)}}{\sharp C_G(H_A)}
    &=\sum_{m=0}^\infty\sum_{A\in S_{2,m}}\frac{q^{3-m}}{3}\notag\\
    &=\frac 23q^3+\frac{q^3}3\sum_{3\nmid m-1>0}\sharp S_{2,m}q^{-m}\notag\\
    &=\frac 23q^3+q(q-1)\sum_{3\nmid m-1>0}q^{-\floor{\frac{m-1}3}}\notag\\
    &=\frac 23q^3+q(q-1)\sum_{r=0}^\infty( q^{-\floor{\frac{3r}3}}+q^{-\floor{\frac{3r+1}3}})\notag\\
    &=\frac 23q^3+2q(q-1)\sum_{r=0}^\infty q^{-r}\notag\\
    &=\frac 23q^3+2q(q-1)\frac q{q-1}\notag\\
    &=\frac 23q^3+2q^2.\label{eq:4.1.2}
\end{align}
If \(A\in S_3\), \(A=L^{\oplus 3}\) where \(L\) is a \(C_l\)-extension of \(K\). From the assumption \(l\mid (q-1)\), \(K\) has all the \(l\)-th roots of unity. A quotient group \(K^\times/(K^\times)^l\) is isomorphic to a bicyclic group \(C_l^2\). We fix a generator \(\mu\) of \(k^\times\). Then \(K^\times/(K^\times)^l\) is generated by classes of \(\mu\) and \(t\). By the Kummer theory, there exists \(\alpha\in L\) such that \(L\) is generated by \(\alpha\) as \(K\)-algebra and
\[
\alpha^l=\mu\text{ or }\mu^it\quad(0\leq i\leq l-1).
\]
If \(\alpha^l=\mu\) then \(A\) is unramified. Thus \(\bsym v_V(A)=0\) from \cite[Lemma 3.4]{WY}. We suppose that \(\alpha^l=\mu^it\). Let \(\zeta_l\in k\) is a primitive \(l\)-th root of unitiy. We choose a generator \(h\) of \(C_l\) such that \(\alpha\cdot h=\zeta_l\alpha\). From Lamma \ref{lem:tameVFct}, we get
\[
\bsym v_V(A)=\age(h).
\]

We put
\[
S_{3,m}=\{A\in S_3\mid \bsym v_V(A)=m\}.
\]
As the action on \(A\) is defined by choice of a conjugate class represented by \(h\), 
\[
\sharp S_{3,m}=\begin{cases}
\frac{l-1}3&(m=0)\\
\frac{l(l-1)}6&(m=1)\\
\frac{l(l-1)}6&(m=2)
\end{cases}.
\]
In this case, \(C_G(C_l)=C_l\). Therefore, we get
\begin{align}
    \sum_{A\in S_3}\frac{q^{3-\bsym v_V(A)}}{\sharp C_G(H_A)}
    &=\sum_{m=0}^2\sum_{A\in S_{3,m}}\frac{q^{3-m}}{l}\notag\\
    &=\frac{l-1}{3l}q^3+\frac{l-1}6q^2+\frac{l-1}6q.\label{eq:4.1.3}
\end{align}
\begin{thm}\label{thm:4.3}
Let \(G\) be a small subgroup of \(\SL_3(k)\) isomorphic to \(C_l\rtimes C_3\) where \(l\) is a prime different from three. Suppose that \(q-1\in l\ZZ\). For the natural representation \(V\) of \(G\), we get
\[
\sum_{A\in\etset(K)}\frac{q^{3-\bsym v_V(A)}}{\sharp C_G(H_A)}=q^3+\left(2+\frac{l-1}6\right)q^2+\frac{l-1}6q.
\]
\end{thm}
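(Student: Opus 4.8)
The plan is to build the sum from the stratification of \(\etset(K)\) by the isomorphism type of the stabilizer \(H_A\), and then simply add the contributions. First I would verify that this stratification has exactly the three pieces \(S_1\sqcup S_2\sqcup S_3\) used above, i.e. that \(H_A\) is forced to be one of \(1\), \(C_3\), or \(C_l\). Up to conjugacy the only subgroups of \(G\cong C_l\rtimes C_3\) are \(1\), \(C_3\), \(C_l\), and \(G\) itself, so the point is to exclude \(H_A\cong G\). Since the nonabelian \(G\) has no nontrivial normal \(3\)-subgroup (the Sylow \(3\)-subgroups are the \(l\) conjugates of \(C_3\), none normal), an extension \(L/K\) with group \(G\) would have trivial wild inertia and hence be tamely ramified; but the Galois group of a tame extension of \(K\) is metacyclic with Frobenius acting on tame inertia by the \(q\)-power map, and the hypothesis \(q-1\in l\ZZ\) makes that action trivial, which would force \(G\) to be abelian. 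This contradiction rules out \(H_A\cong G\) and justifies the partition.

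With the partition in place, the left-hand side equals the sum of the three weighted counts computed in \eqref{eq:4.1.1}, \eqref{eq:4.1.2}, and \eqref{eq:4.1.3}. The strata \(S_1\) and \(S_3\) are the easy ones: \(S_1\) contributes the single unramified algebra \(K^{\oplus 3l}\) with \(\bsym v_V=0\) (by \cite[Lemma 3.4]{WY}) and \(\sharp C_G(H_A)=3l\), while on \(S_3\) the stabilizer \(C_l\) is tame, so Lemma \ref{lem:tameVFct} reduces \(\bsym v_V\) to the age of a generator, and Kummer theory (available because \(l\mid q-1\)) both parametrizes the extensions and organizes the count by age value \(m\in\{0,1,2\}\). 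The stratum \(S_2\), where \(H_A\cong C_3\) and \(3=p\) divides \(\sharp H_A\), carries all the real work, and I expect it to be the main obstacle: the \(v\)-function is no longer the age, so I would use the identity \(\bsym v_V(A)=\bsym v_{V|_{H_A}}(L)\) to pass to the restriction of \(V\) to \(C_3\), describe the \(C_3\)-extensions by Artin--Schreier theory with normalized representatives in \(k/\wp(k)\oplus\bigoplus_{3\nmid j>0}kt^{-j}\), and then identify \(\bsym v_V(A)\) with \(d_{L/K}/2=j+1\) for \(j=-v_K(a)\) via \cite[Lemma 11.1]{Yas3} and Serre's different formula \cite[Proposition 7, p.~50]{Ser}. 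The subsequent count of \(\sharp S_{2,m}\) needs care with the floor exponent and the divisibility condition \(3\nmid j\), after which the series collapses to two geometric series.

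Finally, the theorem follows by adding \eqref{eq:4.1.1}, \eqref{eq:4.1.2}, and \eqref{eq:4.1.3}. The \(q^3\)-coefficients combine as \(\frac{1}{3l}+\frac{2}{3}+\frac{l-1}{3l}=\frac{1+2l+(l-1)}{3l}=1\), the \(q^2\)-coefficients as \(2+\frac{l-1}{6}\), and the only \(q\)-term is \(\frac{l-1}{6}\), giving \(q^3+\left(2+\frac{l-1}{6}\right)q^2+\frac{l-1}{6}q\) as claimed.
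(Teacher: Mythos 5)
Your proposal is correct and follows essentially the same route as the paper: partition \(\etset(K)\) by the isomorphism type of \(H_A\), invoke the three contributions \eqref{eq:4.1.1}, \eqref{eq:4.1.2}, \eqref{eq:4.1.3} obtained from Artin--Schreier theory on the wild stratum and Kummer theory together with Lemma \ref{lem:tameVFct} on the tame strata, and add them. The only place you go beyond the paper is your argument excluding \(H_A\cong G\) (the paper takes the three-part decomposition for granted), and that argument is sound: wild inertia would be a normal \(3\)-subgroup of the nonabelian \(G\), so a \(G\)-extension would be tame, and since \(q-1\in l\ZZ\) the Frobenius action \(\tau\mapsto\tau^q\) on the cyclic inertia of order dividing \(l\) is trivial, forcing the group to be abelian, a contradiction.
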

\begin{proof}
By formulas (\ref{eq:4.1.1}), (\ref{eq:4.1.2}), and (\ref{eq:4.1.3}), we get
\begin{align*}
    \sum_{A\in\etset(K)}\frac{q^{3-\bsym v_V(A)}}{\sharp C_G(H_A)}&=\sum_{i=1}^3\sum_{A\in S_i}\frac{q^{3-\bsym v_V(A)}}{\sharp C_G(H_A)}\\
    &=\frac{q^3}{3l}+\left(\frac 23q^3+2q^2\right)+\left(\frac{l-1}{3l}q^3+\frac{l-1}6q^2+\frac{l-1}6q\right)\\
    &=q^3+\left(2+\frac{l-1}6\right)q^2+\frac{l-1}6q.
\end{align*}
\end{proof}

Next, we consider the case that \(G\) is isomorphic to \(C_l^2\rtimes C_3\). As previous case, we may assume that any elements of \(C_l^2\subset G\) are diagonalized and \(C_3\subset G\) is generated by
\[
\bsym S=\begin{bmatrix}
0&1&0\\
0&0&1\\
1&0&0
\end{bmatrix}.
\]
Note that in this case, we have
\[
C_l^2=\{\diag(\zeta_l^a,\zeta_l^b,\zeta_l^c)\mid 0\leq a,b,c\leq l-1,a+b+c\in l\ZZ\}.
\]

In this case, the set \(\etset(K)\) is divided into four parts:
\begin{gather*}
S_1=\{A\mid H_A\text{ is trivial}\},\\
S_2=\{A\mid H_A\cong C_3\},\\
S_3=\{A\mid H_A\cong C_l\},\\
S_4=\{A\mid H_A\cong C_l^2\}.
\end{gather*}

If \(A\in S_1\), \(A\) is unramified. Therefore we get \(\bsym v_V(A)=0\) from \cite[Lemma 3.4]{WY}. As \(C_G(H_A)=G\), we get
\begin{align}
\sum_{A\in S_1}\frac{q^{3-\bsym v_V(A)}}{\sharp C_G(H_A)}=\frac 1{3l^2}q^3.\label{eq:4.1.4}
\end{align}

If \(A\in S_2\), then \(A=L^{\oplus l^2}\) where \(L\) is a \(C_3\)-extension over \(K\). Then we repeat the same argument as in the case \(A\in S_2\) when \(G\cong C_l\rtimes C_3\). We let \(\alpha, a, j\) as in the case. Then we have
\[
\bsym v_V(A)=\begin{cases}
0&(j=0)\\
j+1&(j>0)
\end{cases}.
\]
We put
\[
S_{2,m}=\{A\in S_2\mid\bsym v_V(A)=m\}.
\]
Then
\[
\sharp S_{2,m}=\begin{cases}
2&(m=0)\\
3q^{m-\floor{\frac{m-1}3}-2}(q-1)&(3\nmid(m-1)>0)\\
0&(\text{otherwise})
\end{cases}.
\]
As \(C_G(H_A)=C_3\), we get
\begin{align}
    \sum_{A\in S_2}\frac{q^{3-\bsym v_V(A)}}{\sharp C_G(H_A)}&=\sum_{m=0}^\infty\sum_{A\in S_2}\frac{q^{3-m}}{3}\notag\\
    &=\frac 23q^3+q(q-1)\sum_{3\nmid(m-1)>0}q^{-\floor{\frac{m-1}3}}\notag\\
    &=\frac 23q^3+2q^2.\label{eq:4.1.5}
\end{align}

If \(A\in S_3\), \(A=L^{\oplus 3}\) where \(L\) is a \(C_l\)-extension of \(K\). Similarly as in the case \(A\in S_3\) when \(G\cong C_l\rtimes C_3\), we can choose a generator \(\alpha\in L\) over \(K\) satisfying
\[
\alpha^l=\mu\text{ or }\mu^it\quad(0\leq i\leq l-1).
\]
and \(h\in H_A\) such that \(\alpha\cdot h=\zeta_l\alpha\).

If \(\alpha^l=\mu\) then \(A\) is unramified. Thus \(\bsym v_V(A)=0\) from \cite[Lemma 3.4]{WY}. We suppose that \(\alpha^l=\mu^it\). By Lemma \ref{lem:tameVFct}, we get
\[
\bsym v_V(A)=\age(h).
\]
We put
\[
S_{3,m}=\{A\in S_3\mid \bsym v_V(A)=m\}.
\]
From
\[
C_l^2=\{\diag(\zeta_l^a,\zeta_l^b,\zeta_l^c)\mid 0\leq a,b,c\leq l-1,a+b+c\in l\ZZ\},
\]
we have
\begin{gather*}
    \sharp\{h\in C_l^2\mid\age(h)=1\}=\frac{(l-1)(l+4)}2,\\
    \sharp\{h\in C_l^2\mid\age(h)=2\}=\frac{(l-1)(l-2)}2.\\
\end{gather*}
As the action on \(A\) is defined by choice of a conjugate class represented by \(h\), 
\[
\sharp S_{3,m}=\begin{cases}
\frac{l^2-1}3&(m=0)\\
\frac{l(l-1)(l+4)}6&(m=1)\\
\frac{l(l-1)(l-2)}6&(m=2)
\end{cases}.
\]
In this case, \(C_G(H_A)=C_l^2\). Therefore, we get
\begin{align}
    \sum_{A\in S_3}\frac{q^{3-\bsym v_V(A)}}{\sharp C_G(H_A)}
    &=\sum_{m=0}^2\sum_{A\in S_{3,m}}\frac{q^{3-m}}{l^2}\notag\\
    &=\frac{l^2-1}{3l^2}q^3+\frac{(l-1)(l+4)}{6l}q^2+\frac{(l-1)(l-2)}{6l}q.\label{eq:4.1.6}
\end{align}

If \(A\in S_4\), \(A=L^{\oplus 3}\) where \(L\) is \(C_l^2\)-extension of \(K\). As \(K^\times/(K^\times)^l\) is isomorphic to \(C_l^2\) and it is generated by \(\mu, t\), there are generators \(\alpha,\beta\in L\) such that
\[
\alpha^l=\mu,\ \beta^l=t.
\]
Note that \(\beta\) is an uniformizer of \(L\). We choose \(h_1,h_2\in C_l^2\) satisfying
\begin{align*}
    \alpha\cdot h_1&=\zeta_l\alpha,&\beta\cdot h_1&=\beta,\\
    \alpha\cdot h_2&=\alpha,&\beta\cdot h_2&=\zeta_l\beta.
\end{align*}
From Lemma \ref{lem:tameVFct}, we get
\[
\bsym v_V(A)=\age(h_2).
\]

For pairs of generators \((h_1,h_2),(h^\prime_1,h_2^\prime)\) of \(H_A\), we define a equivalence relation \(\sim\) by
\[
(h_1,h_2)\sim(h^\prime_1,h_2^\prime)\Leftrightarrow\exists g\in G,h_1=gh^\prime_1g^{-1},h_2=gh^\prime_2g^{-1}.
\]
Then a \(G\)-action on \(A\) is defined by the equivalent class of the pair of generators \((h_1,h_2)\) of \(C_l^2\). We have
\begin{gather*}
    \sharp\{(h_1,h_2)\mid h_1,h_2 \text{ generate }C_l^2,\age(h_2)=1\}=\frac{(l-1)(l+4)}2(l^2-l),\\
    \sharp\{(h_1,h_2)\mid h_1,h_2 \text{ generate }C_l^2,\age(h_2)=2\}=\frac{(l-1)(l-2)}2(l^2-l).
\end{gather*}
Hence, for
\[
S_{4,m}=\{A\in S_4\mid \bsym v_V(A)=m\},
\]
we get
\begin{align*}
    \sharp S_{4,m}=\begin{cases}
    \frac{(l-1)(l+4)(l^2-l)}6&(m=1)\\
    \frac{(l-1)(l-2)(l^2-l)}6&(m=2)
    \end{cases}.
\end{align*}
Therefore, as \(C_G(H_A)=C_l^2\), we get
\begin{align}
    \sum_{A\in S_4}\frac{q^{3-\bsym v_V(A)}}{\sharp C_G(H_A)}
    &=\sum_{m=1}^2\sum_{A\in S_{4,m}}\frac{q^{3-m}}{l^2}\notag\\
    &=\frac{(l-1)^2(l+4)}{6l}q^2+\frac{(l-1)^2(l-2)}{6l}q.\label{eq:4.1.7}
\end{align}
\begin{thm}\label{thm:4.5}
Let \(G\) be a small subgroup of \(\SL_3(k)\) isomorphic to \(C_l^2\rtimes C_3\) where \(l\) is a prime different from three. Suppose that \(q-1\in l\ZZ\). For the natural representation \(V\) of \(G\), we get
\[
\sum_{A\in\etset(K)}\frac{q^{3-\bsym v_V(A)}}{\sharp C_G(H_A)}=q^3+\left(2+\frac{(l-1)(l+4)}6\right)q^2+\frac{(l-1)(l-2)}6q
\]
\end{thm}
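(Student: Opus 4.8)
The plan is to exploit the disjoint decomposition $\etset(K)=S_1\sqcup S_2\sqcup S_3\sqcup S_4$ by the isomorphism type of the stabilizer $H_A$, which was set up just above. Since the weight $q^{3-\bsym v_V(A)}/\sharp C_G(H_A)$ attached to each algebra depends only on $A$, the total sum over $\etset(K)$ is simply the sum of the four partial sums, and each of these has already been evaluated in formulas (\ref{eq:4.1.4}), (\ref{eq:4.1.5}), (\ref{eq:4.1.6}) and (\ref{eq:4.1.7}). The whole proof therefore reduces to adding these four expressions and collecting terms by powers of $q$, exactly as in the proof of Theorem \ref{thm:4.3}.

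First I would gather the $q^3$ terms, which arise only from $S_1$, $S_2$ and $S_3$ with coefficients $\tfrac{1}{3l^2}$, $\tfrac{2}{3}$ and $\tfrac{l^2-1}{3l^2}$. Adding the first and third gives $\tfrac{l^2}{3l^2}=\tfrac13$, and together with $\tfrac23$ this produces exactly $q^3$. This cancellation — confirming that the leading coefficient is $1$ rather than some expression in $l$ — is the one point where the arithmetic must be watched, and it serves as the main consistency check on the four partial computations; I expect it to be the only place where a sign or denominator error would surface.

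It remains to combine the $q^2$ and $q$ terms, which come from $S_2$, $S_3$ and $S_4$. The key structural observation is that the $S_4$ coefficients in (\ref{eq:4.1.7}) are exactly $(l-1)$ times the corresponding $S_3$ coefficients in (\ref{eq:4.1.6}), so the two strata fuse cleanly: at the $q^2$ level one gets $\tfrac{(l-1)(l+4)}{6l}+\tfrac{(l-1)^2(l+4)}{6l}=\tfrac{(l-1)(l+4)}{6l}\cdot l=\tfrac{(l-1)(l+4)}{6}$, and identically the two $q$-contributions fuse to $\tfrac{(l-1)(l-2)}{6}$, the factor $l$ in the denominator cancelling in each case. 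Adding the $2q^2$ from $S_2$ then yields the coefficient $2+\tfrac{(l-1)(l+4)}{6}$ of $q^2$, and assembling the three groups of terms gives the asserted formula. The only genuinely new feature relative to Theorem \ref{thm:4.3} is the presence of the stratum $S_4$ of $C_l^2$-extensions of $K$, and the mild surprise is how tidily its contribution telescopes into that of $S_3$.
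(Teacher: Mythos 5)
Your proposal is correct and matches the paper's own proof: it sums the already-established formulas (\ref{eq:4.1.4})--(\ref{eq:4.1.7}) over the decomposition $S_1\sqcup S_2\sqcup S_3\sqcup S_4$ and collects powers of $q$, and your arithmetic (the $q^3$ coefficients summing to $1$, and the $S_3$, $S_4$ contributions fusing via the factor $1+(l-1)=l$) checks out exactly.
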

\begin{proof}
By the formulas (\ref{eq:4.1.4}), (\ref{eq:4.1.5}), (\ref{eq:4.1.6}), and (\ref{eq:4.1.7}), we get
\begin{align*}
    \sum_{A\in\etset(K)}\frac{q^{3-\bsym v_V(A)}}{\sharp C_G(H_A)}&=\sum_{i=1}^4\sum_{A\in S_i}\frac{q^{3-\bsym v_V(A)}}{\sharp C_G(H_A)}\\
    &=\frac{q^3}{3l^2}+\left(\frac 23q^3+2q^2\right)\\
    &\qquad+\left(\frac{(l^2-1)}{3l^2}q^3+\frac{(l-1)(l+4)}{6l}q^2+\frac{(l-1)(l-2)}{6l}q\right)\\
    &\qquad+\left(\frac{(l-1)^2(l+4)}{6l}q^2+\frac{(l-1)^2(l-2)}{6l}q\right)\\
    &=q^3+\left(2+\frac{(l-1)(l+4)}6\right)q^2+\frac{(l-1)(l-2)}6q.
\end{align*}
\end{proof}

\subsection{The case that \textit{G} is symmetric type}
In this section, we consider the case that \(G\) is a small subgroup of \(\SL_3(k)\) isomorphic to \(C_l^2\rtimes\symm\) where \(l\) is a prime different from three. From \cite[Lemma 3.2]{Yam}, we may assume that any elements of \(C_l^2\) are diagonalized and \(\symm\subset G\) is generated by
\[
\bsym S=\begin{bmatrix}
0&1&0\\
0&0&1\\
1&0&0
\end{bmatrix},\ \bsym T=\begin{bmatrix}
0&0&-1\\
0&-1&0\\
-1&0&0
\end{bmatrix}.
\]

Firstly, we consider the case that \(l\ne 2\). In this case, the set \(\etset(K)\) is divided into seven parts:
\begin{gather*}
S_1=\{A\mid H_A\text{ is trivial}\},\\
S_2=\{A\mid H_A\cong C_3\},\\
S_3=\{A\mid H_A\cong C_l\},\\
S_4=\{A\mid H_A\cong C_l^2\},\\
S_5=\{A\mid H_A\cong C_2\},\\
S_6=\{A\mid H_A\cong C_{2l}\},\\
S_7=\{A\mid H_A\cong \symm\}.
\end{gather*}

If \(A\in S_1\) then \(A\) is unramified and \(C_G(H_A)=G\). Thus we get 
\(\bsym v_V(A)=0\) from \cite[Lemma 3.4]{WY} and
\begin{align}\label{eq:4.2.1}
\sum_{A\in S_1}\frac{q^{3-\bsym v_V(A)}}{\sharp C_G(H_A)}=\frac 1{6l^2}q^3.
\end{align}

If \(A\in S_2\), then \(A=L^{\oplus 2l^2}\) where \(L\) is a \(C_3\)-extension over \(K\). Then we discuss as in the case \(A\in S_2\) when \(G\cong C_l\rtimes C_3\). We let \(\alpha, a, j\) as in the case. Then we have
\[
\bsym v_V(A)=\begin{cases}
0&(j=0)\\
j+1&(j>0)
\end{cases}.
\]
We put
\[
S_{2,m}=\{A\in S_2\mid\bsym v_V(A)=m\}.
\]
As \(\bsym S\) and \(\bsym S^2\) are conjugate in \(G\), we have
\[
\sharp S_{2,m}=\begin{cases}
1&(m=0)\\
\frac 32q^{m-\floor{\frac{m-1}3}-2}(q-1)&(3\nmid(m-1)>0)\\
0&(\text{otherwise})
\end{cases}.
\]
As \(C_G(H_A)=C_3\), we get
\begin{align}
    \sum_{A\in S_2}\frac{q^{3-\bsym v_V(A)}}{\sharp C_G(H_A)}&=\sum_{m=0}^\infty\sum_{A\in S_2}\frac{q^{3-m}}{3}\notag\\
    &=\frac 13q^3+\frac 12q(q-1)\sum_{3\nmid(m-1)>0}q^{-\floor{\frac{m-1}3}}\notag\\
    &=\frac 13q^3+q^2.\label{eq:4.2.2}
\end{align}

If \(A\in S_3\), \(A=L^{\oplus 3}\) where \(L\) is a \(C_l\)-extension of \(K\). Similarly as in the case \(A\in S_3\) when \(G\cong C_l\rtimes C_3\), we can choose a generator \(\alpha\in L\) over \(K\) satisfying
\[
\alpha^l=\mu\text{ or }\mu^it\quad(0\leq i\leq l-1),
\]
and \(h\in H_A\) such that \(\alpha\cdot h=\zeta_l\alpha\).

If \(\alpha^l=\mu\) then \(A\) is unramified. Thus \(\bsym v_V(A)=0\) from \cite[Lemma 3.4]{WY}. We suppose that \(\alpha^l=\mu^it\). By Lemma \ref{lem:tameVFct}, we get
\[
\bsym v_V(A)=\age(h).
\]
We put
\[
S_{3,m}=\{A\in S_3\mid \bsym v_V(A)=m\}.
\]
When \(h=\diag(\zeta_l^a,\zeta_l^b,\zeta_l^c)\in C_l^2\), if two of \(a,b,c\) are equal, then
\[
\sharp C_G(H_A)=2l^2,
\]
otherwise
\[
\sharp C_G(H_A)=l^2.
\]
We divide the set \(S_{3,m}\) into two parts:
\begin{gather*}
S_{3,m,f}=\{A\in S_{3,m}\mid\sharp C_G(H_A)=l^2\},\\
S_{3,m}^\prime=\{A\in S_{3,m}\mid\sharp C_G(H_A)=2l^2\}.
\end{gather*}
Now the elements of \(S_{3,m}^\prime\) corresponds to the conjugate class in \(C_l^2\), which is represented by \(\diag(\zeta_l^a,\zeta_l^{2l-2a},\zeta_l^a)\), and choice of exponent \(i\) in \(\alpha^l=\mu^it\) from \(0\leq i\leq l-1\). Moreover, we have
\[
\age(\diag(\zeta_l^a,\zeta_l^{2l-2a},\zeta_l^a))=\begin{cases}
1&(1\leq a\leq \floor{\frac l2})\\
2&(\floor{\frac l2}<a\leq l-1)
\end{cases}.
\]
Hence
\begin{gather*}
    \sharp S_{3,1}^\prime=l\floor{\frac l2},\\
    \sharp S_{3,2}^\prime=l\left(l-1-\floor{\frac l2}\right).
\end{gather*}
Moreover, we have
\begin{gather*}
    \sharp S_{3,1,f}=\frac l6\left(\frac{(l-1)(l+4)}{2}-3\floor{\frac l2}\right),\\
    \sharp S_{3,2,f}=\frac l6\left(\frac{(l-1)(l-2)}{2}-3\left(l-1-\floor{\frac l2}\right)\right).
\end{gather*}
On the other hand, \(S_{3,0}\) is corresponding to the conjugate classes in \(C_l^2\). Hence
\begin{gather*}
    \sharp S_{3,0}^\prime=l-1,\\
\sharp S_{3,0,f}=\frac{l^2-1-3(l-1)}6=\frac{(l-1)(l-2)}6.
\end{gather*}
Therefore, we get
\begin{align}
    \sum_{A\in S_3}\frac{q^{3-\bsym v_V(A)}}{\sharp C_G(H_A)}
    &=\sum_{m=0}^2\left(\sum_{A\in S_{3,m,f}}\frac{q^{3-m}}{l^2}+\sum_{A\in S_{3,m}^\prime}\frac{q^{3-m}}{2l^2}\right)\notag\\
    &=\sum_{m=0}^2\left(\frac{\sharp S_{3,m,f}}{l^2}+\frac{\sharp S_{3,m}^\prime}{2l^2}\right)q^{3-m}\notag\\
    &=\left(\frac{(l-1)(l-2)}{6l^2}+\frac{l-1}{2l^2}\right)q^3\notag\\
    &\qquad+\left(\frac 1{6l}\left(\frac{(l-1)(l+4)}2-3\floor{\frac l2}\right)+\frac 1{2l}\floor{\frac l2}\right)q^2\notag\\
    &\qquad+\left(\frac 1{6l}\left(\frac{(l-1)(l-2)}2-3\left(l-1-\floor{\frac l2}\right)\right)+\frac 1{2l}\left(l-1-\floor{\frac l2}\right)\right)q\notag\\
    &=\frac{l^2-1}{6l^2}q^3+\frac{(l-1)(l+4)}{12l}q^2+\frac{(l-1)(l-2)}{12l}q.\label{eq:4.2.3}
\end{align}

If \(A\in S_4\), \(A=L^{\oplus 6}\) where \(L\) is \(C_l^2\)-extension of \(K\). As \(K^\times/(K^\times)^l\) is isomorphic to \(C_l^2\) and it is generated by \(\mu, t\), there are generators \(\alpha,\beta\in L\) such that
\[
\alpha^l=\mu,\ \beta^l=t.
\]
We choose \(h_1,h_2\in C_l^2\) satisfying
\begin{align*}
    \alpha\cdot h_1&=\zeta_l\alpha,&\beta\cdot h_1&=\beta,\\
    \alpha\cdot h_2&=\alpha,&\beta\cdot h_2&=\zeta_l\beta.
\end{align*}
By Lemma \ref{lem:tameVFct}, we have
\[
\bsym v_V(A)=\age(h_2),
\]
because \(\beta\) is an uniformizer of \(L\).

For pairs of generators \((h_1,h_2),(h^\prime_1,h_2^\prime)\) of \(H_A\), we define a equivalence relation \(\sim\) by
\[
(h_1,h_2)\sim(h^\prime_1,h_2^\prime)\Leftrightarrow\exists g\in G,h_1=gh^\prime_1g^{-1},h_2=gh^\prime_2g^{-1}.
\]
Then a \(G\)-action on \(A\) is defined by the equivalent class of the pair of generators \((h_1,h_2)\) of \(C_l^2\). We have
\begin{gather*}
    \sharp\{(h_1,h_2)\mid h_1,h_2 \text{ generate }C_l^2,\age(h_2)=1\}=\frac{(l-1)(l+4)}2(l^2-l),\\
    \sharp\{(h_1,h_2)\mid h_1,h_2 \text{ generate }C_l^2,\age(h_2)=2\}=\frac{(l-1)(l-2)}2(l^2-l).
\end{gather*}
When \(h_1,h_2\in C_l^2\) generate \(C_l^2\), for \(g\in G\), we have
\begin{align*}
    h_1=gh_1g^{-1},h_2=gh_2g^{-1}&\Leftrightarrow g\in C_G(C_l^2)\\
    &\Leftrightarrow g\in C_l^2.
\end{align*}
Hence, for
\[
S_{4,m}=\{A\in S_4\mid \bsym v_V(A)=m\},
\]
we get
\begin{align*}
    \sharp S_{4,m}=\begin{cases}
    \frac{(l-1)(l+4)(l^2-l)}{12}&(m=1)\\
    \frac{(l-1)(l-2)(l^2-l)}{12}&(m=2)
    \end{cases}.
\end{align*}
As \(C_G(H_A)=C_l^2\), we get
\begin{align}
    \sum_{A\in S_4}\frac{q^{3-\bsym v_V(A)}}{\sharp C_G(H_A)}
    &=\sum_{m=1}^2\sum_{A\in S_{4,m}}\frac{q^{3-m}}{l^2}\notag\\
    &=\frac{(l-1)^2(l+4)}{12l}q^2+\frac{(l-1)^2(l-2)}{12l}q.\label{eq:4.2.4}
\end{align}

If \(A\in S_5\), then \(A\cong L^{\oplus 3l^2}\) where \(L\) is a \(C_2\)-extension of \(K\). From the Kummer theory, \(L\) has a generator \(\alpha\in L\) over \(K\) satisfying
\[
\alpha^2=\mu\text{ or }\mu^it\quad(i=0,1).
\]
where \(\mu\) is a generator of \(k^\times\) because \(K^\times/(K^\times)^2\cong C_2^2\) which is generated by \(\mu\) and \(t\). Replacing conjugate one, We may assume that
\[
\alpha\cdot\tau=-\alpha.
\]
We put
\[
S_{5,m}=\{A\in S_5\mid \bsym v_V(A)=m\}.
\]

If \(\alpha^2=\mu\), then \(A\) is unramified. Therefore \(\bsym v_V(A)=0\) from \cite[Lemma 3.4]{WY}.

We suppose that \(\alpha^2=\mu^it\ (i=0,1)\). Then \(\alpha\) is an uniformizer of \(L\). Hence we get
\[
\bsym v_V(A)=\age(\tau)=1
\]
from Lemma \ref{lem:tameVFct}.

Therefore
\[
\sharp S_{5,m}=\begin{cases}
1&(m=0)\\
2&(m=1)
\end{cases}.
\]
As \(C_G(H_A)\) is generated by \(\diag(\zeta_l,\zeta_l^{l-2},\zeta_l), \tau\), we have
\begin{align}
    \sum_{A\in S_5}\frac{q^{3-\bsym v_V(A)}}{\sharp C_G(H_A)}
    &=\sum_{m=0}^1\sum_{A\in S_{5,m}}\frac{q^{3-m}}{2l}\notag\\
    &=\frac 1{2l}q^3+\frac 1lq^2.\label{eq:4.2.5}
\end{align}

If \(A\in S_6\), \(A=L^{\oplus 3l}\) where \(L\) is \(C_{2l}\)-extension of \(K\). From the assumption \(q-1\in 2l\ZZ\), \(K\) contains all the \(2l\)-th roots of unity. A quotient group \(K^\times/(K^\times)^{2l}\) is isomorphic to a bicyclic group \(C_{2l}^2\) generated by \(\mu\) and \(t\). By the Kummer theory, there exists \(\alpha\in L\) such that \(L\) is generated by \(\alpha\) as \(K\)-algebra and
\[
\alpha^{2l}\in\{\mu\}\cup\{\mu^it\mid0\leq i\leq 2l-1\}\cup\left\{\mu^{2j-1}t^2\middle |1\leq j\leq l\right\}\cup\{\mu^et^l\mid e=1,2\}.
\]
If \(\alpha^{2l}=\mu\) then \(A\) is unramified. Thus \(\bsym v_V(A)=0\) from \cite[Lemma 3.4]{WY}. We suppose that \(\alpha^{2l}\ne\mu\). Let \(\zeta_{2l}\in k\) is a primitive \(2l\)-th root of unitiy. We may assume that \(H_A\) is generated by \(\diag(1,\zeta_l,\zeta_l^{-1})\tau\). There is a generator \(h\in H_A\) such that
\[
\alpha\cdot h=\zeta_{2l}\alpha.
\]
We write \(h\) as
\[
(\diag(1,\zeta_l,\zeta_l^{-1})\tau)^{2i-1}=\diag(\zeta_l^{-i},\zeta_l^{2i-1},\zeta_l^{-i+1})\tau.
\]
by an \(i\) such that \(1\leq i\leq l\) and \(2i-1\ne l\).
\begin{lemm}\label{lem:4.7}
For above \(A\) including the case that \(l=2\), we get
\[
\bsym v_V(A)=\begin{cases}
1&(v_K(\alpha^{2l})\leq 2, r\geq\frac l2 )\\
2&(v_K(\alpha^{2l})\leq 2, r<\frac l2)\\
1&(v_K(\alpha^{2l})=l)
\end{cases},
\]
where \(r\) is an integer such that \(1\leq r\leq l-1\) and \(2i-1\equiv r\mod l\).
\end{lemm}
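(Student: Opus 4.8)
The plan is to reduce everything to Lemma~\ref{lem:tameVFct}. Since $\sharp H_A=2l$ is coprime to $p=3$, the group $H_A\cong C_{2l}$ is tame, so the lemma computes $\bsym v_V(A)$ as the age of the canonical inertia generator, and the whole statement becomes one eigenvalue calculation. First I would pin down the ramification of $L=K(\alpha)$. Writing $s=v_K(\alpha^{2l})\in\{1,2,l\}$ for the three ramified cases, the tame Kummer extension $L/K$ has ramification index $e=2l/\gcd(2l,s)$, hence $e=2l,\,l,\,2$ respectively. A one-line valuation count gives $v_L(\alpha)=\frac{1}{2l}v_L(\alpha^{2l})=\frac{se}{2l}=\frac{s}{\gcd(2l,s)}=1$ in every case, so $\alpha$ is a uniformizer throughout; recording this first is what lets me feed $\alpha$ into the lemma uniformly.

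Because $\alpha\cdot h=\zeta_{2l}\alpha$ while the lemma needs the element acting on the uniformizer by a primitive $e$-th root of unity, the relevant inertia generator is $g=h^{2l/e}$, which acts by $\zeta_{2l}^{2l/e}=\zeta_e$; thus $\bsym v_V(A)=\age(g)$. From the monomial form $h=\diag(\zeta_l^{-i},\zeta_l^{2i-1},\zeta_l^{-i+1})\tau$ with $\tau=\bsym T$, the second basis vector is an eigenvector with eigenvalue $-\zeta_l^{2i-1}$, while the remaining $2\times2$ block on the coordinates $1,3$ contributes the two eigenvalues $\pm\zeta_{2l}^{-2i+1}$, since their square is the product $\zeta_l^{-i}\cdot\zeta_l^{-i+1}=\zeta_l^{-2i+1}$ of the two off-diagonal entries. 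Only this product enters, so the precise placement of the two entries is irrelevant. Raising these three eigenvalues to the power $2l/e$ then produces the eigenvalues of $g$.

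I would then compute the age case by case, setting $r\equiv 2i-1\pmod l$ with $1\le r\le l-1$; the hypothesis $2i-1\ne l$ forces $r\ne 0$, and $l$ odd forces $r\ne l/2$. For $s=l$ (so $e=2$), the $l$-th powers collapse to $-1,-1,1$ because $l$ is odd, giving $\age(g)=1$. For $s=2$ (so $e=l$), the squares are $\zeta_l^{2r},\zeta_l^{-r},\zeta_l^{-r}$, and reducing exponents modulo $l$ gives $\age(g)=2$ when $2r<l$ and $\age(g)=1$ when $2r>l$, i.e. the split at $r<l/2$ versus $r>l/2$. For $s=1$ (so $e=2l$), the pair of exponents coming from the block always reduces, as an unordered set, to $\{l-r,2l-r\}$, independently of whether $2i-1<l$ or $2i-1>l$, so only the reduction of the third exponent $l+2r$ modulo $2l$ is decisive, and it reproduces exactly the same criterion in $r$. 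This is precisely why the statement merges $s=1$ and $s=2$ into the single condition $v_K(\alpha^{2l})\le 2$.

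The step I expect to be the main obstacle is the $s=1$ case: the two block eigenvalues $\pm\zeta_{2l}^{-2i+1}$ reduce to different residues modulo $2l$ depending on the size (equivalently the parity) of $r$, so one must check that their total contribution to the age is insensitive to this swap. Recognizing that their reduced exponents always form the set $\{l-r,2l-r\}$ is the key simplification that collapses the analysis to a single inequality in $r$. Finally I would verify the degenerate case $l=2$ (so $H_A\cong C_4$): here $r=1$ is forced and $r\ge l/2$ always holds, the ranges $v_K=2$ and $v_K=l$ coincide, and all ramified cases return $\bsym v_V(A)=1$, consistent with the stated formula.
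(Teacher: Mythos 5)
Your proposal is correct and follows essentially the same route as the paper's proof: both observe that $\alpha$ is a uniformizer, pass to the appropriate power $g=h^{2l/e}$ so that Lemma \ref{lem:tameVFct} applies, and compute $\age(g)$ by reducing the eigenvalue exponents modulo the ramification index, with the identical case split $r<\frac l2$ versus $r\geq\frac l2$. The only differences are presentational: you derive the eigenvalues $-\zeta_l^r,\ \pm\zeta_{2l}^{-r}$ explicitly from the monomial block structure and verify the degenerate case $l=2$ separately, both of which the paper leaves implicit.
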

\begin{proof}
For each case, the \(\alpha\in L\) is an uniformizer of \(L\). The ramification index of \(L/K\) is equal to \(2l/v_K(\alpha^{2l})\). Hence we get
\[
\bsym v_V(A)=\begin{cases}
\age(h)&(v_K(\alpha^{2l})=1)\\
\age(h^2)&(v_K(\alpha^{2l})=2)\\
\age(h^l)&(v_K(\alpha^{2l}=l)
\end{cases}
\]
from Lemma \ref{lem:tameVFct}.

We consider the case that \(v_K(\alpha^{2l})=1\). The eigenvalues of \(h\) are \(-\zeta_l^r,\pm\zeta_{2l}^{-r}\). Thus we get
\begin{align*}
\age(h)&=\begin{cases}
\frac 1{2l}\{(2r+l)+(2l-r)+(l-r)\}&(1\leq r<\frac l2)\\
\frac 1{2l}\{(2r-l)+(2l-r)+(l-r)\}&(r\geq\frac l2)
\end{cases}\\
&=\begin{cases}
2&(1\leq r<\frac l2)\\
1&(r\geq\frac l2)
\end{cases}.
\end{align*}
Hence
\[
\bsym v_V(A)=\begin{cases}
2&(1\leq r<\frac l2)\\
1&(r\geq\frac l2)
\end{cases}.
\]

Next, we consider the case that \(v_K(\alpha^{2l})=2\). The eigenvalues of \(h^2\) are \(\zeta_l^{2r},\zeta_l^{-r},\zeta_l^{-r}\). Thus we get
\begin{align*}
\age(h^2)&=\begin{cases}
\frac 1l\{2r+(l-r)+(l-r)\}&(1\leq r<\frac l2)\\
\frac 1l\{(2r-l)+(l-r)+(l-r)\}&(r\geq\frac l2)
\end{cases}\\
&=\begin{cases}
2&(1\leq r<\frac l2)\\
1&(r\geq\frac l2)
\end{cases}.
\end{align*}
Hence
\[
\bsym v_V(A)=\begin{cases}
2&(1\leq r<\frac l2)\\
1&(r\geq\frac l2)
\end{cases}.
\]

Finally, we consider the case that \(v_K(\alpha^{2l})=l\). The eigenvalues of \(h^l\) are \(1,-1,-1\). Thus we get
\begin{align*}
\bsym v_V(A)=\age(h^l)&=\frac 12(0+1+1)=1.
\end{align*}
\end{proof}
We put
\[
S_{6,m}=\{A\in S_3\mid \bsym v_V(A)=m\}.
\]
Then we get 
\[
\sharp S_{6,m}=\begin{cases}
l-1&(m=0)\\
\frac{3l(l-1)}2+2(l-1)&(m=1)\\
\frac{3l(l-1)}2&(m=2)
\end{cases}.
\]
In this case, \(C_G(H_A)=H_A\). Therefore, we get
\begin{align}
    \sum_{A\in S_6}\frac{q^{3-\bsym v_V(A)}}{\sharp C_G(H_A)}
    &=\sum_{m=0}^2\sum_{A\in S_{6,m}}\frac{q^{3-m}}{2l}\notag\\
    &=\frac{l-1}{2l}q^3+\frac{(l-1)(3l+4)}{4l}q^2+\frac{3(l-1)}4q.\label{eq:4.2.6}
\end{align}

If \(A\in S_7\) then \(A\cong L^{\oplus l^2}\) where \(L\) is a \(\symm\)-extension over \(K\). We may assume that \(H_A\) is generated by \(\bsym S,\bsym T\). Let \(Q\) be an intermediate field \(L^{\bsym S}\) of the extension \(L/K\). Then the extension \(L/Q\) is a \(C_3\)-extesion and \(Q/K\) is a \(C_2\)-extension. From the Kummer theory, there exists a generator \(\alpha\in Q\) over \(K\) satisfying
\[
\alpha^2=\mu\text{ or }\mu^it\quad(i=0,1).
\]
From the Artin-Schreier theory, there exists a generator \(\beta\) of \(L\) over \(Q\) satisfying
\[
\beta^3-\beta\in\FF_3\lambda\oplus\bigoplus_{3\nmid j>0}k_Q\pi_Q^{-j}.
\]
where \(k_Q\) is the residue field of \(Q\), \(\lambda\in k_Q-\wp(k_Q)\), and \(\pi_Q\) is a uniformaizer of \(Q\). We set
\[
RP_Q=\FF_3\lambda\oplus\bigoplus_{3\nmid j>0}k_Q\pi_Q^{-j}.
\]
As \(\pi_Q\cdot\bsym T=\pm\pi_Q\), the set \(RP_Q\) is invariant under the action of \(\bsym T\). We may assume that
\begin{gather*}
    \alpha\cdot\bsym S=\alpha,\\
    \alpha\cdot\bsym T=-\alpha,\\
    \beta\cdot\bsym S=\beta+1.
\end{gather*}
We write as \(\beta\cdot\bsym T=\sum_{i=0}^2c_i\beta^i\) with \(c_i\in Q\). As \(\bsym T\bsym S=\bsym S^2\bsym T\), we have
\[
\beta\cdot(\bsym T\bsym S)=\beta\cdot(\bsym S^2\bsym T)
\]
and that implies
\[
(c_0+c_1+c_2)+(c_1-c_2)\beta+c_2\beta^2=(c_0-1)+c_1\beta+c_2\beta.
\]
Hence \(c_2=0,c_1=-1\), in other words, \(\beta\cdot\bsym T=c_0-\beta\). As \(\bsym T^2=1\), 
\[
\beta=(\beta\cdot\bsym T)\cdot\bsym T=(c_0\cdot\bsym T-c_0)+\beta.
\]
Hence \(c_0\in Q^{\bsym T}=K\). Let \(b=\beta^3-\beta\in RP_Q\). Then
\[
b\cdot\bsym T=(\beta^3-\beta)\cdot T=(c_0^3-c_0)-b.
\]
Thus \(b\cdot\bsym T+b=c_0^3-c_0\in\wp(K)\). As the set \(RP_Q\) is closed under the \(\bsym T\)-action and addition, \(b\cdot\bsym T+b\in RP_Q\). Therefore \(b\cdot T+b=0\) because \(RP_Q\cap\wp(K)=\{0\}\). As \(c_0^3-c_0=0\), \(c_0\in\FF_3\). By replacing \(\beta\) by \(\beta\pm 1\) if neccesary, we may assume that \(\beta\cdot\bsym T=-\beta\). Then \((\beta^3-\beta)\cdot\bsym T=-(\beta^3-\beta)\). It implies \(\beta^3-\beta\in RP_Q\cap\alpha K\).

If \(\alpha^2=\mu\) then we take \(t\) as an uniformaizer of \(Q\). Hence we get
\[
RP_Q=\FF_3\lambda\oplus\bigoplus_{3\nmid j>0}k_Qt^{-j}.
\]
In this case, \(k_Q=k\oplus\alpha k\). For \(r\alpha+s\in k_Q\) (\(r,s\in k\)), we have
\begin{align*}
    \wp(r\alpha+s)&=(r\alpha+s)^3-(r\alpha+s)\\
    &=r^3\alpha^3+s^3-r\alpha-s\\
    &=(r^3\mu-r)\alpha+(s^3-s).
\end{align*}
Hence \(\wp(k_Q)\cap k=\wp(k)\) that is not \(k\). Thus we can choose \(\lambda\) from \(k\). Therefore
\[
RP_Q\cap\alpha K=\bigoplus_{3\nmid j>0}\alpha kt^{-j}.
\]

If \(\alpha\ne\mu\), then we take \(\alpha\) as an uniformizer of \(Q\). Then
\[
RP_Q\cap\alpha K=\bigoplus_{j>0,\gcd(j,6)=1}k\alpha^{-j}.
\]
\begin{lemm}\label{Lem:4.8}
Let \(A\in S_7\) be as above. Let \(e_{L/K}\) be the ramification index of \(L/K\). More precisely, 
\[
e_{L/K}=\begin{cases}
3&(\alpha^2=\mu)\\
6&(\text{otherwise})
\end{cases}.
\]
Then
\[
\bsym v_V(A)=v_K(\alpha^2)+n_1+n_2,
\]
where
\begin{gather*}
    n_1=\ceil{\frac{-v_L(\beta)}{e_{L/K}}},\\
    n_2=\ceil{\frac{-2v_L(\beta)-v_K(\alpha)}{e_{L/K}}}.
\end{gather*}
\end{lemm}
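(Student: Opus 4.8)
The plan is to compute the tuning module \(\Xi:=\Xi_{L/K}^V=\Hom_{\mathcal O_K}^{\symm}(M,\mathcal O_L)\) explicitly and to read \(\bsym v_V\) off a single determinant. By the restriction formula \(\bsym v_V(A)=\bsym v_{V|_{H_A}}(L)\) I may work with the one \(\symm\)-extension \(L/K\) and with \(\sharp H_A=6\). Writing \(M=\mathcal O_Kx_1\oplus\mathcal O_Kx_2\oplus\mathcal O_Kx_3\) with the \(\symm\)-action induced by \(\bsym S,\bsym T\), an equivariant \(\phi\) is determined by \(u:=\phi(x_1)\) through \(\phi(x_2)=u\cdot\bsym S\) and \(\phi(x_3)=u\cdot\bsym S^2\), subject to the single relation \(u\cdot\bsym T=-u\cdot\bsym S^2\); thus \(\phi\mapsto u\) identifies \(\Xi\) with the lattice \(\{u\in\mathcal O_L\mid u\cdot\bsym T=-u\cdot\bsym S^2\}\). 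Identifying \(\Hom_{\mathcal O_K}(M,\mathcal O_L)\) with \(\mathcal O_L^3\) via \(\phi\mapsto(\phi(x_1),\phi(x_2),\phi(x_3))\), the colength of \(\mathcal O_L\cdot\Xi\) in \(\mathcal O_L^3\) equals \(v_L(\det\mathcal V)\), where \(\mathcal V\) has \(r\)-th row \((g_r,\,g_r\cdot\bsym S,\,g_r\cdot\bsym S^2)\) for an \(\mathcal O_K\)-basis \(g_1,g_2,g_3\) of \(\Xi\). Using \(\mathrm{length}_{\mathcal O_K}=f_{L/K}\cdot\mathrm{length}_{\mathcal O_L}\) and \(e_{L/K}f_{L/K}=6\), this gives \(\bsym v_V(A)=v_L(\det\mathcal V)/e_{L/K}\).

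To find a \(K\)-basis I would write \(u=f_0(\beta)+\alpha f_1(\beta)\) with \(f_0,f_1\in K[\beta]_{\le 2}\). Since \(\bsym S\) fixes \(\alpha\) and sends \(\beta\mapsto\beta+1\) while \(\bsym T\) sends \(\alpha\mapsto-\alpha,\ \beta\mapsto-\beta\), the relation \(u\cdot\bsym T=-u\cdot\bsym S^2\) splits into \(f_0(-\beta)=-f_0(\beta-1)\) and \(f_1(-\beta)=f_1(\beta-1)\). Solving these functional equations for degree \(\le 2\) polynomials in characteristic three yields the \(K\)-basis \(e_1=\beta-1,\ e_2=\alpha(\beta^2+\beta),\ e_3=\alpha\) of \(\Xi\otimes_{\mathcal O_K}K\). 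Computing the \(\bsym S\)-translates from \(\beta\cdot\bsym S=\beta+1\) and reducing modulo three, the matrix \(\mathcal V_K\) built from \(e_1,e_2,e_3\) satisfies \(\det\mathcal V_K=-\alpha^2\); in particular it is a unit multiple of \(\alpha^2\), so \(v_L(\det\mathcal V_K)=v_L(\alpha^2)\) and \(v_L(\det\mathcal V_K)/e_{L/K}=v_K(\alpha^2)\).

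The heart of the proof is promoting \(\{e_r\}\) to an \(\mathcal O_K\)-basis of the lattice \(\Xi\): I claim \(g_r:=t^{m_r}e_r\) with \(m_r=\ceil{-v_L(e_r)/e_{L/K}}\) works, i.e.\ \(\Xi=\bigoplus_r\mathcal O_Kt^{m_r}e_r\). This rests on a no-cancellation property: when \(c_1e_1+c_2e_2+c_3e_3\) is expanded in the monomials \(\alpha^\varepsilon\beta^j\), its valuation is the minimum of the valuations of the individual terms, so integrality is equivalent to integrality of each \(c_re_r\) separately, forcing \(v_K(c_r)\ge m_r\). The no-cancellation holds because these monomials have pairwise distinct valuations: in the totally ramified case \(\alpha^2=\mu^it\) (so \(e_{L/K}=6\)) the six monomials \(\alpha^\varepsilon\beta^j\) are distinct modulo \(6\), which follows from \(v_L(\beta)=-j_0\) with \(\gcd(j_0,6)=1\), itself forced by \(\beta^3-\beta\in\bigoplus_{\gcd(j,6)=1}k\alpha^{-j}\) as recorded before the lemma; in the case \(\alpha^2=\mu\) (so \(Q/K\) is unramified and \(e_{L/K}=3\)) the powers \(\beta^j\) are distinct modulo \(3\) because \(3\nmid j_0\), while \(\{1,\alpha\}\) is a residue basis of \(k_Q\) over \(k\), and combining the two observations gives the same conclusion. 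Since \(v_L(\beta)<0\) in both cases, evaluating \(v_L(e_r)\) gives \(m_1=\ceil{-v_L(\beta)/e_{L/K}}=n_1\), \(m_2=\ceil{(-2v_L(\beta)-v_L(\alpha))/e_{L/K}}=n_2\), and \(m_3=0\).

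Assembling: because the rows of \(\mathcal V\) are \(t^{m_r}(e_r,e_r\cdot\bsym S,e_r\cdot\bsym S^2)\), we get \(\det\mathcal V=t^{m_1+m_2+m_3}\det\mathcal V_K\), whence \(\bsym v_V(A)=(m_1+m_2+m_3)+v_K(\alpha^2)=v_K(\alpha^2)+n_1+n_2\), as claimed. I expect the main obstacle to be this lattice step: verifying the no-cancellation property demands a separate and careful treatment of the ramified and unramified quadratic subextension \(Q/K\) and a precise reading of \(v_L(\beta)\) off the shape of \(\beta^3-\beta\). Once this valuation bookkeeping is in place, the determinant evaluation \(\det\mathcal V_K=-\alpha^2\) and the final assembly are routine characteristic-three calculations.
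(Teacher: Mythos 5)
Your proposal is correct and follows essentially the same route as the paper: identify the tuning module inside \(\mathcal O_L\) via the \(\bsym S\)- and \(\bsym T\)-equivariance conditions, split it into an \(\mathcal O_K\)-lattice by a no-cancellation-of-valuations argument, and read \(\bsym v_V(A)\) off a determinant that is a unit multiple of \(\alpha^2t^{n_1+n_2}\). The only real difference is the choice of coordinate: the paper parametrizes an equivariant map by \(\phi(x_2)\), whose condition \(c_2\cdot\bsym T=-c_2\) gives the pure-monomial basis \(\{\alpha,\beta,\alpha\beta^2\}\) so that no-cancellation follows at once from the pairwise distinctness of \(v_L(\alpha),v_L(\beta),v_L(\alpha\beta^2)\) modulo \(3\), whereas your parametrization by \(\phi(x_1)\) produces the mixed basis \(\{\beta-1,\alpha(\beta^2+\beta),\alpha\}\) and therefore requires your more laborious two-case (ramified/unramified \(Q/K\)) cancellation analysis, which you carry out correctly.
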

\begin{proof}
Let \(M\) be the linear part of the coordinate ring of \(V\). As \(x_1,x_2,x_3\in M\) is a \(\mathcal O_K\)-basis of \(M\), we take the dual basis \(\varphi_1,\varphi_2,\varphi_3\in\Hom_{\mathcal O_K}(M,\mathcal O_L)\).

We compute the tuning module \(\Xi_{L/K}^{V|_{H_A}}\). For \(c_1,c_2,c_3\in\mathcal O_L\),
\begin{align*}
    &\sum_{i=1}^3c_i\varphi_i\in\Xi_{L/K}^{V|_{H_A}}\\
    \Leftrightarrow&1\leq\forall j\leq 3,\begin{cases}
    (\sum_{i=1}^3c_i\varphi_i)(x_j\cdot\bsym S)=(\sum_{i=1}^3c_i\varphi_i)(x_j)\cdot\bsym S\\
    (\sum_{i=1}^3c_i\varphi_i)(x_j\cdot\bsym T)=(\sum_{i=1}^3c_i\varphi_i)(x_j)\cdot\bsym T
    \end{cases}\\
    \Leftrightarrow&\begin{cases}
    \left(\sum_{i=1}^3c_i\varphi_i\right)(-x_3)=c_1\cdot\tau\\
    \left(\sum_{i=1}^3c_i\varphi_i\right)(-x_2)=c_2\cdot\tau\\
    \left(\sum_{i=1}^3c_i\varphi_i\right)(-x_1)=c_3\cdot\tau
    \end{cases}\\
    \Leftrightarrow&\begin{cases}
    c_1=c_2\cdot\bsym S=c_3\cdot\bsym S^2\\
    c_1=-c_3\cdot\bsym T,\ c_2=-c_2\cdot\bsym T
    \end{cases}\\
    \Leftrightarrow& c_1=c_2\cdot\bsym S,\ c_3=c_2\cdot\bsym S^2,\ c_2\in(K\alpha+K\beta+K\alpha\beta^2)\cap\mathcal O_L.
\end{align*}
As \(v_L(\alpha)=3v_Q(\alpha)\in 3\ZZ\) and \(v_L(\beta)=v_Q(\beta^3-\beta)\not\in3\ZZ\), we have
\[
v_L(\alpha)\not\equiv v_L(\beta)\not\equiv v_L(\alpha\beta^2)\not\equiv v_L(\alpha)\mod 3.
\]
Hence
\[
(K\alpha+K\beta+K\alpha\beta^2)\cap\mathcal O_L=\alpha\mathcal O_K+t^{n_1}\beta\mathcal O_K+t^{n_2}\alpha\beta^2\mathcal O_K.
\]
We put
\begin{gather*}
    \psi_1=\alpha(\varphi_1+\varphi_2+\varphi_3),\\
    \psi_2=t^{n_1}((\beta+1)\varphi_1+\beta\varphi_2+(\beta-1)\varphi_3),\\
    \psi_3=\alpha t^{n_2}((\beta+1)^2\varphi_1+\beta^2\varphi+(\beta-1)^2\varphi_3).
\end{gather*}
Then \((\psi_i)_{i=1}^3\) is an \(\mathcal O_K\)-basis of \(\Xi_{L/K}^{V|_{H_A}}\). Therefore,
\begin{align*}
\bsym v_V(A)&=v_K\left(\alpha^2t^{n_1+n_2}\det\begin{bmatrix}
1&1&1\\
\beta+1&\beta&\beta-1\\
(\beta+1)^2&\beta&(\beta-1)^2
\end{bmatrix}\right)\\
&=v_K(\alpha^2)+n_1+n_2.
\end{align*}
\end{proof}
We set
\[
S_{7,m,j}=\{A\in S_7\mid v_K(\alpha^2)=m,\ v_Q(\beta^3-\beta)=-j\}.
\]
From Lemma \ref{Lem:4.8},
\[
v_V(A)=\begin{cases}
\ceil{\frac j3}+\ceil{\frac{2j}3}&(m=0,j\in\ZZ-3\ZZ)\\
1+\ceil{\frac j6}+\ceil{\frac{2j-3}6}&(m=1,j\in\ZZ,\gcd(j,6)=1)
\end{cases}.
\]
We have
\[
    \sharp S_{7,m,j}=\begin{cases}
    \frac 12(q-1)q^{j-\floor{\frac j3}-1}&(m=0,j\in\ZZ-3\ZZ)\\
    (q-1)q^{j-\floor{\frac j2}-\floor{\frac j3}+\floor{\frac j6}-1}&(m=1,j\in\ZZ,\gcd(j,6)=1)
    \end{cases}.
\]
Therefore
\begin{align}
    \sum_{A\in S_7}\frac{q^{3-\bsym v_V(A)}}{C_G(H_A)}
    &=\sum_{3\nmid j>0}\sum_{A\in S_{7,0,j}}q^{3-\ceil{\frac j3}-\ceil{\frac{2j}3}}\notag\\&\qquad+\sum_{j>0,\gcd(j,6)=1}\sum_{A\in S_{7,1,j}}q^{2-\ceil{\frac j6}-\ceil{\frac{2j-3}6}}\notag\\
    &=\frac 12(q-1)q^2\sum_{3\nmid j>0}q^{j-\floor{\frac j3}-\ceil{\frac j3}-\ceil{\frac{2j}3}}\notag\\&\qquad+(q-1)q\sum_{j>0,\gcd(j,6)=1}q^{j-\floor{\frac j2}-\floor{\frac j3}+\floor{\frac j6}-\ceil{\frac j6}-\ceil{\frac{2j-3}6}}\notag\\
    &=\frac 12(q-1)q^2\sum_{r=0}^\infty(q^{(3r+1)-r-(r+1)-(2r+1)}+q^{(3r+2)-r-(r+1)-(2r+2)})\notag\\&\qquad+(q-1)q\sum_{r=0}^\infty(q^{(6r+1)-3r-2r-1-2r}+q^{(6r+5)-(3r+2)-(2r+1)-1-(2r+2)})\notag\\
    &=\frac 12(q-1)q^2\sum_{r=0}^\infty(q^{-r-1}+q^{-r-1})+(q-1)q\sum_{r=0}^\infty(q^{-r}+q^{-r-1})\notag\\
    &=(q-1)q\sum_{r=0}^\infty q^{-r}+(q-1)q(1+q^{-1})\sum_{r=0}^\infty q^{-r}\notag\\
    &=2q^2+q.\label{eq:4.2.7}
\end{align}
\begin{thm}\label{thm:4.9}
Let \(G\) be a small subgroup isomorphic to \(C_l^2\rtimes\symm\) where \(l\) is a prime. Suppose that \(l\ne 2,3\) and \(q-1\in 2l\ZZ\). For the natural representation \(V\) of \(G\), we get
\[
\sum_{A\in\etset(K)}\frac{q^{3-\bsym v_V(A)}}{\sharp C_G(H_A)}=q^3+\frac{(l+5)(l+7)}{12}q^2+\frac{(l+1)(l+5)}{12}q.
\]
\end{thm}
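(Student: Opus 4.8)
The plan is to assemble the total from the stratification of $\etset(K)$ into the seven pieces $S_1,\dots,S_7$ indexed by the isomorphism type of the stabilizer $H_A$, which was set up at the beginning of this subsection. Since these strata partition $\etset(K)$, Theorem \ref{strPtCt} lets me write
\[
\sum_{A\in\etset(K)}\frac{q^{3-\bsym v_V(A)}}{\sharp C_G(H_A)}=\sum_{i=1}^{7}\sum_{A\in S_i}\frac{q^{3-\bsym v_V(A)}}{\sharp C_G(H_A)},
\]
so the entire computation reduces to substituting the seven partial sums (\ref{eq:4.2.1})--(\ref{eq:4.2.7}) and simplifying the resulting polynomial in $q$.

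Concretely, I would collect the coefficients of $q^{3}$, $q^{2}$ and $q$ separately. The $q^{3}$-coefficient receives contributions only from $S_1,S_2,S_3,S_5,S_6$, namely $\tfrac1{6l^2}$, $\tfrac13$, $\tfrac{l^2-1}{6l^2}$, $\tfrac1{2l}$ and $\tfrac{l-1}{2l}$, and the first check is that these sum to $1$ once brought over the common denominator $6l^2$. For the $q^{2}$-coefficient the key move is to add the $S_3$ and $S_4$ contributions first, since $\tfrac{(l-1)(l+4)}{12l}+\tfrac{(l-1)^2(l+4)}{12l}=\tfrac{(l-1)(l+4)}{12}$ removes the factor $l$ from the denominator; combining this with the integer terms from $S_2$ and $S_7$ and the remaining $l$-denominator terms from $S_5$ and $S_6$ should produce $\tfrac{(l+5)(l+7)}{12}$ via $l^2+12l+35=(l+5)(l+7)$. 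The same cancellation, $\tfrac{(l-1)(l-2)}{12l}+\tfrac{(l-1)^2(l-2)}{12l}=\tfrac{(l-1)(l-2)}{12}$, handles the $q$-coefficient, which after adding the $S_6$ and $S_7$ terms factors as $\tfrac{(l+1)(l+5)}{12}$ using $l^2+6l+5=(l+1)(l+5)$.

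The main point requiring care is purely bookkeeping: tracking which denominators carry a factor of $l$ (these originate from $\sharp C_G(H_A)\in\{l^2,2l^2,2l\}$ in the ramified strata) and confirming that every such factor cancels, so that the final coefficients are genuine polynomials in $l$ with rational coefficients. The substantive analytic input — the $v$-function computations for the ramified $C_{2l}$- and $\symm$-extensions carried out in Lemmas \ref{lem:4.7} and \ref{Lem:4.8}, together with the orbit counts feeding the cardinalities $\sharp S_{i,m}$ — is already complete, so no new difficulty arises at this stage; the theorem then follows by direct substitution and the two factorizations above.
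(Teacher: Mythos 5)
Your proposal is correct and takes essentially the same route as the paper: the paper's proof likewise writes the total as $\sum_{i=1}^{7}\sum_{A\in S_i}$, substitutes the seven partial sums (\ref{eq:4.2.1})--(\ref{eq:4.2.7}), and simplifies directly to $q^3+\frac{(l+5)(l+7)}{12}q^2+\frac{(l+1)(l+5)}{12}q$. Your coefficient-wise checks (the $q^3$-coefficients summing to $1$, the cancellation of the factors of $l$ via $\frac{(l-1)(l+4)}{12l}+\frac{(l-1)^2(l+4)}{12l}=\frac{(l-1)(l+4)}{12}$, and the factorizations $l^2+12l+35=(l+5)(l+7)$ and $l^2+6l+5=(l+1)(l+5)$) all verify correctly against the paper's computation.
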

\begin{proof}
From the formulas (\ref{eq:4.2.1}), (\ref{eq:4.2.2}), (\ref{eq:4.2.3}), (\ref{eq:4.2.4}), (\ref{eq:4.2.5}), (\ref{eq:4.2.6}), and (\ref{eq:4.2.7}), we get
\begin{align*}
    \sum_{A\in\etset(K)}\frac{q^{3-\bsym v_V(A)}}{\sharp C_G(H_A)}&=\sum_{i=1}^7\sum_{A\in S_i}\frac{q^{3-\bsym v_V(A)}}{\sharp C_G(H_A)}\\
    &=\frac 1{6l^2}q^3+(\frac 13q^3+q^2)\\
    &\qquad+\left(\frac{l^2-1}{6l^2}q^3+\frac{(l-1)(l+4)}{12l}q^2+\frac{(l-1)(l-2)}{12l}q\right)\\
    &\qquad+\left(\frac{(l-1)^2(l+4)}{12l}q^2+\frac{(l-1)^2(l-2)}{12l}q\right)\\
    &\qquad+\left(\frac 1{2l}q^3+\frac 1lq^2\right)\\
    &\qquad+\left(\frac{l-1}{2l}q^3+\frac{(l-1)(3l+4)}{4l}q^2+\frac{3(l-1)}4q\right)\\
    &\qquad+(2q^2+q)\\
    &=q^3+\frac{(l+5)(l+7)}{12}q^2+\frac{(l+1)(l+5)}{12}q.
\end{align*}
\end{proof}

If \(l=2\), the set \(\etset(K)\) is divided into eight part which are \(S_1, S_2,\ldots, S_7\) defining in the case of \(l\ne 2\) and
\[
S_8=\{A\in\etset(K)\mid H_A\text{ generated by }\bsym T \text{ and }\diag(-1,0,-1)\}.
\]
For the sums \(\sum_{A\in S_i}\frac{q^{3-\bsym v_V(A)}}{\sharp C_G(H_A)}\) \((i=1,2,3,4,5,7)\), we can use the formulas (\ref{eq:4.2.1}), (\ref{eq:4.2.2}), (\ref{eq:4.2.3}), (\ref{eq:4.2.4}), (\ref{eq:4.2.5}), and (\ref{eq:4.2.7}).

If \(A\in S_6\), then \(A\) corresponds to a cyclic subgroup of \(K^\times/(K^{\times})^4\) by the Kummer theory. All the cyclic subgroup is generated by \(\mu\), \(\mu^it\) \((0\leq i\leq 3)\), or \(\mu t^2\). From Lemma \ref{lem:4.7}, we have \(\bsym v_V(A)=1\) for any \(A\in S_6\) not corresponding to the subgroup generated by \(\mu\). Hence we have
\begin{align}
    \sum_{A\in S_6}\frac{q^{3-\bsym v_V(A)}}{\sharp C_G(H_A)}&=\frac 14 q^3+\frac{\sharp S_6-1}4q^2\notag\\
    &=\frac 14q^3+\frac 54q^2.\label{eq:4.2.8}
\end{align}

If \(A\in S_8\), then \(A=L^{\oplus 6}\) where \(L\) is a \(C_2^2\)-extension of \(K\). From the Kummer theory, \(L\) is generated by \(\alpha,\beta\) satisfying
\[
\alpha^2=\mu, \beta^2=t
\]
over \(K\). Applying Lemma \ref{lem:tameVFct}, we get \(\bsym v_V(A)=1\) because \(\beta\) is a uniformizer of \(L\), the ramification index of \(L/K\) is two,  and the age of matrices in \(\SL_3(K)\) of order two are one. Giving an action of \(G\) on \(A\) corresponds to choosing generators \(h_1,h_2\) of \(H_A\) such that
\begin{align*}
    \alpha\cdot h_1&=-\alpha,&\alpha\cdot h_2&=\alpha\\
    \beta\cdot h_1&=\beta,&\beta\cdot h_2&=-\beta.
\end{align*}
up to conjugate. Then the choices are
\[
(h_1,h_2)=(\bsym T,\diag(-1,1,-1)),\ (\diag(-1,1,-1),\bsym T),\text{ or }(\bsym T,\diag(-1,1,-1)\bsym T).
\]
Hence
\[
\sharp S_8=3.
\]
As \(C_G(H_A)=H_A\), we get
\begin{align}\label{eq:4.2.9}
    \sum_{A\in S_8}\frac{q^{3-\bsym v_V(A)}}{\sharp C_G(H_A)}&=\frac 34q^2.
\end{align}
\begin{thm}
Let \(G\) be a small subgroup isomorphic to \(C_2^2\rtimes\symm\). Suppose that  \(q-1\in 4\ZZ\). For the natural representation \(V\) of \(G\), we get
\[
\sum_{A\in\etset(K)}\frac{q^{3-\bsym v_V(A)}}{\sharp C_G(H_A)}=q^3+6q^2+q.
\]
\end{thm}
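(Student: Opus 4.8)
The plan is to exploit the partition of \(\etset(K)\) into the eight pieces \(S_1,\dots,S_8\) already set up for the case \(l=2\), and to assemble the answer by adding the contribution of each piece. For the six strata \(S_1,S_2,S_3,S_4,S_5,S_7\) the stabilizer types (trivial, \(C_3\), \(C_l\), \(C_l^2\), \(C_2\), \(\symm\)) together with their centralizers are exactly those analyzed for general \(l\), so I would simply specialize the formulas \eqref{eq:4.2.1}, \eqref{eq:4.2.2}, \eqref{eq:4.2.3}, \eqref{eq:4.2.4}, \eqref{eq:4.2.5}, \eqref{eq:4.2.7} at \(l=2\). The two remaining strata are the ones whose description genuinely changes at \(l=2\): \(S_6\), where the stabilizer is \(C_{2l}=C_4\) and the Kummer classes in \(K^\times/(K^\times)^4\) were enumerated separately, and the extra stratum \(S_8\) with stabilizer \(C_2^2\) generated by \(\bsym T\) and \(\diag(-1,0,-1)\), which only appears when \(l=2\). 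These are covered by the dedicated formulas \eqref{eq:4.2.8} and \eqref{eq:4.2.9}.

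Concretely, I would first record the six specialized values. Formula \eqref{eq:4.2.1} gives \(\tfrac1{24}q^3\); \eqref{eq:4.2.2} gives \(\tfrac13 q^3+q^2\); \eqref{eq:4.2.3} gives \(\tfrac18 q^3+\tfrac14 q^2\), the linear term vanishing because the factor \(l-2\) is zero; \eqref{eq:4.2.4} gives \(\tfrac14 q^2\), again with vanishing linear term; \eqref{eq:4.2.5} gives \(\tfrac14 q^3+\tfrac12 q^2\); and \eqref{eq:4.2.7} contributes \(2q^2+q\) independently of \(l\). To these I add the two new contributions \(\tfrac14 q^3+\tfrac54 q^2\) from \eqref{eq:4.2.8} and \(\tfrac34 q^2\) from \eqref{eq:4.2.9}.

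The final step is the bookkeeping. Collecting the \(q^3\)-coefficients \(\tfrac1{24}+\tfrac13+\tfrac18+\tfrac14+\tfrac14\) yields \(1\); collecting the \(q^2\)-coefficients \(1+\tfrac14+\tfrac14+\tfrac12+2+\tfrac54+\tfrac34\) yields \(6\); and the only surviving linear term is the \(q\) coming from \eqref{eq:4.2.7}. This produces \(q^3+6q^2+q\), as claimed.

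Since the substantive work — the \(v\)-function computations of Lemmas \ref{lem:4.7} and \ref{Lem:4.8}, and the stratum counts \(\sharp S_{i,m}\) — is already carried out, I do not expect a genuine obstacle. The one point deserving a line of justification, and the closest thing to a subtlety here, is the legitimacy of reusing the general-\(l\) formulas at \(l=2\): one must confirm that none of the derivations for \(S_1,\dots,S_5,S_7\) silently assumed \(l\) odd, for instance in the Kummer description of \(K^\times/(K^\times)^l\) or in the conjugacy counting inside \(C_l^2\), and that the \(l=2\) coincidences which force \(S_6\) and \(S_8\) into separate treatment do not contaminate those strata. Granting this, the theorem reduces to the elementary arithmetic above.
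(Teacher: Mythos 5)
Your proposal is correct and follows essentially the same route as the paper: the paper likewise reuses formulas (\ref{eq:4.2.1}), (\ref{eq:4.2.2}), (\ref{eq:4.2.3}), (\ref{eq:4.2.4}), (\ref{eq:4.2.5}), (\ref{eq:4.2.7}) specialized at \(l=2\), adds the dedicated contributions (\ref{eq:4.2.8}) for \(S_6\) and (\ref{eq:4.2.9}) for \(S_8\), and sums to obtain \(q^3+6q^2+q\). Your specialized values and bookkeeping match the paper's computation exactly (the paper writes \(\tfrac{3}{24}q^3\) where you write \(\tfrac18 q^3\)), and your cautionary remark about reusing the general-\(l\) formulas is the same point the paper passes over with a one-line assertion.
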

\begin{proof}
By using the formulas the formulas (\ref{eq:4.2.1}), (\ref{eq:4.2.2}), (\ref{eq:4.2.3}), (\ref{eq:4.2.4}), (\ref{eq:4.2.5}), (\ref{eq:4.2.7}), (\ref{eq:4.2.8}), and (\ref{eq:4.2.9}), we get
\begin{align*}
    \sum_{A\in\etset(K)}\frac{q^{3-\bsym v_V(A)}}{\sharp C_G(H_A)}
    &=\frac 1{24}q^3+\left(\frac 13q^3+q^2\right)+\left(\frac 3{24}q^3+\frac 14q^2\right)+\frac 14q^2\\
    &\qquad+\left(\frac 14q^3+\frac 12q^2\right)+(2q^2+q)+\left(\frac 14q^3+\frac 54q^2\right)+\frac 34q^2\\
    &=q^3+6q^2+q.
\end{align*}
\end{proof}


\section{Euler characteristic}

In this section, we compute the Euler characteristic, which defined from the \(l\)-adic cohomology, of a crepant resolution \(Y\) of \(V/G\) for a representation \(V\) of a cyclic or symmetric type subgroup \(G\subset\SL_3(k)\). 

To compute the Euler characteristic, we use the following well-known formula.
\begin{prop}
For a smooth variety \(Y\) over \(\FF_q\). Suppose that
\[
\sharp Y(\FF_{q^m})=\sum_{i=1}^na_iq^{im}
\]
where \(a_i\in\ZZ\). Then
\[
\chi(Y)=\sum_{i=1}^na_i.
\]
\end{prop}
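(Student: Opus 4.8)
The plan is to derive the identity from the Grothendieck--Lefschetz trace formula together with the linear independence of distinct exponential functions. Throughout, $\chi(Y)$ denotes the $\ell$-adic Euler characteristic $\sum_j(-1)^j\dim_{\QQ_\ell}H^j_c(Y_{\bar\FF_q},\QQ_\ell)$ for a fixed prime $\ell$ different from the characteristic of $\FF_q$, and the hypothesis $\sharp Y(\FF_{q^m})=\sum_{i=1}^n a_iq^{im}$ is understood to hold for every $m\ge 1$ (as it does in the intended applications, where the right-hand side arises as a polynomial point-count valid over all finite extensions).

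First I would invoke the Grothendieck--Lefschetz trace formula, expressing the point count over each extension as an alternating sum of traces of the geometric Frobenius $F$ on compactly supported cohomology,
\[
\sharp Y(\FF_{q^m})=\sum_j(-1)^j\mathrm{Tr}\left(F^m\mid H^j_c(Y_{\bar\FF_q},\QQ_\ell)\right).
\]
Listing the eigenvalues of $F$ on each $H^j_c$ with multiplicity and collecting them by their exact value, this rewrites as $\sharp Y(\FF_{q^m})=\sum_\lambda m_\lambda\lambda^m$, where $\lambda$ runs over the finitely many distinct eigenvalues and $m_\lambda\in\ZZ$ is the signed multiplicity $\sum_j(-1)^j(\text{multiplicity of }\lambda\text{ in }H^j_c)$. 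Summing over $\lambda$ recovers $\sum_\lambda m_\lambda=\sum_j(-1)^j\dim_{\QQ_\ell}H^j_c=\chi(Y)$.

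Next I would equate the two descriptions of the sequence $(\sharp Y(\FF_{q^m}))_{m\ge 1}$, giving $\sum_\lambda m_\lambda\lambda^m=\sum_{i=1}^n a_iq^{im}$ for all $m\ge 1$. The essential input is that the functions $m\mapsto\mu^m$ attached to pairwise distinct complex numbers $\mu$ are linearly independent on the positive integers, a Vandermonde argument. Applying this to the union of the eigenvalue set $\{\lambda\}$ with $\{q,q^2,\dots,q^n\}$ forces the coefficient of each base to match on the two sides: $m_\lambda=a_i$ whenever $\lambda=q^i$, while $m_\lambda=0$ for every eigenvalue $\lambda$ not of the form $q^i$, and $a_i=0$ whenever $q^i$ is not a Frobenius eigenvalue.

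Finally, summing the matched coefficients yields
\[
\chi(Y)=\sum_\lambda m_\lambda=\sum_{i\,:\,q^i\text{ is an eigenvalue}}a_i=\sum_{i=1}^n a_i,
\]
the last step absorbing the terms with $a_i=0$. I expect the only genuinely delicate point to be the bookkeeping in this coefficient-matching step, where one must group Frobenius eigenvalues by exact value rather than by weight or absolute value before applying the independence of exponentials; the trace formula and the identification $\chi=\chi_c$ (automatic here since $Y$ is smooth) are standard and present no real obstacle.
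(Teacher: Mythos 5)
Your proof is correct, but it takes a genuinely different route from the paper's. The paper never invokes the trace formula directly: it computes the zeta function $Z(t)=\exp\bigl(\sum_{m\ge 1}\sharp Y(\FF_{q^m})\,t^m/m\bigr)$ explicitly from the hypothesis, via the logarithmic derivative $\frac{d}{dt}\log Z(t)=\sum_{i=1}^n\frac{a_iq^i}{1-q^it}$, hence $Z(t)=\prod_{i=1}^n(1-q^it)^{-a_i}$, and then reads off $\chi(Y)$ as the degree of the rational function $\prod_{i=1}^n(1-q^it)^{a_i}$, invoking the Weil conjectures for the fact that this degree (i.e.\ $-\deg Z$ as a rational function) equals the Euler characteristic. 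You instead apply the Grothendieck--Lefschetz trace formula for each $m$, group the Frobenius eigenvalues by exact value with signed multiplicities $m_\lambda$, and match coefficients against $\sum_i a_iq^{im}$ using linear independence of the sequences $m\mapsto\mu^m$ for distinct $\mu$. The cohomological input is ultimately the same (the trace formula is also what underlies the rationality of $Z$), but your version yields slightly finer information --- every eigenvalue not equal to some $q^i$ has vanishing signed multiplicity and $m_{q^i}=a_i$ --- at the cost of the eigenvalue bookkeeping and an explicit independence lemma, whereas the paper's generating-function computation packages all of that into the single statement that $\chi(Y)$ is the degree of $1/Z(t)$, avoiding any discussion of individual eigenvalues. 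One small point in your write-up: the Frobenius eigenvalues live in $\overline{\QQ}_\ell$, so to speak of ``pairwise distinct complex numbers'' you should fix an embedding $\overline{\QQ}_\ell\hookrightarrow\CC$ (or run the Vandermonde argument directly over $\overline{\QQ}_\ell$); this is cosmetic and does not affect the validity of the argument.
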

\begin{proof}
Let \(Z(t)\in\QQ[[t]]\) be the zeta function of \(Y\), which is defined as
\[
Z(t)=\exp\left(\sum_{m=1}^\infty\sharp Y(\FF_{q^m})\frac{t^m}m\right),
\]
or equivalently
\[
\frac d{dt}\log Z(t)=\sum_{m=1}^\infty\sharp Y(\FF_{q^m})t^{m-1}.
\]
From the assumption, we have
\begin{align*}
\sum_{m=1}^\infty\sharp Y(\FF_{q^m})t^{m-1}&=\sum_{m=1}^\infty\sum_{i=1}^na_iq^{im}t^{m-1}\\
&=\sum_{i=1}^na_iq^i\sum_{m=1}^\infty (q^it)^{m-1}\\
&=\sum_{i=1}^n\frac{a_iq^i}{1-q^it}.
\end{align*}
Thus
\[
\log Z(t)=\int\sum_{i=1}^n\frac{a_iq^i}{1-q^it}dt=\sum_{i=1}^n-a_i\log(1-q^it),
\]
and hence
\[
Z(t)=\exp(\sum_{i=1}^n-a_i\log(1-q^it))=\prod_{i=1}^n\frac 1{(1-q^it)^{a_i}}.
\]
From the Weil conjecture we get
\[
\chi(Y)=\deg\left(\prod_{i=1}^n(1-q^it)^{a_i}\right)=\sum_{i=1}^na_i.
\]
\end{proof}
\begin{coro}
Let \(G\) be a cyclic type or symmetric type subgroup of \(\SL_3(k)\). For a crepant resolution \(Y\to V/G\) of a quotient of a canonical representation \(V\) of \(G\), we have
\[
\chi(Y)=\begin{cases}
3+\frac{l-1}3&(G\cong C_l\rtimes\ZZ/3\ZZ)\\
3+\frac{l^2-1}3&(G\cong(C_l)^2\rtimes\ZZ/3\ZZ)\\
\frac{(l-1)(l-2)}6+2l+4&(G:\text{symmetric type})
\end{cases}
\]
\end{coro}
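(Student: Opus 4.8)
The plan is to read off \(\chi(Y)\) directly from the Main Theorem. By the discussion of Section 3, a crepant resolution \(Y\to\AA^3_k/G\) satisfies \(\sharp Y(k)=\stpt(V/G)\), and Theorems \ref{thm:4.3}, \ref{thm:4.5}, \ref{thm:4.9} (together with the final theorem of Section 4.2 covering \(G\cong C_2^2\rtimes\symm\)) express \(\stpt(V/G)\) as a polynomial \(\sum_i a_iq^i\) whose coefficients \(a_i\) depend only on \(l\) and on the isomorphism type of \(G\), not on \(q\). Thus \(\sharp Y(\FF_q)=\sum_i a_iq^i\) is already known; the issue is to produce \(\sharp Y(\FF_{q^m})\) for every \(m\) so that the Proposition above becomes applicable.

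First I would check that the hypotheses of the Main Theorem survive base change from \(k=\FF_q\) to \(\FF_{q^m}\). Each \(\FF_{q^m}\) again has characteristic three, and if \(l\mid q-1\) (respectively \(2l\mid q-1\)) then \(q\equiv 1\) modulo \(l\) (respectively modulo \(2l\)), hence \(q^m\equiv 1\) modulo the same integer; so both congruence conditions \(q-1\in l\ZZ\) and \(q-1\in 2l\ZZ\) are preserved. The embedding \(G\subset\SL_3(k)\), its smallness, and the normal forms of Section 4 are unchanged, and \(Y_{\FF_{q^m}}:=Y\times_k\FF_{q^m}\) is again a crepant resolution of the corresponding quotient over \(\FF_{q^m}\). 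Applying the Main Theorem over \(\FF_{q^m}\) then identifies \(\sharp Y(\FF_{q^m})=\sharp Y_{\FF_{q^m}}(\FF_{q^m})\) with the stringy point count of \(V/G\) computed over \(\FF_{q^m}\), namely \(\sum_i a_iq^{im}\) with the \emph{same} coefficients \(a_i\) as before. I expect this uniformity to be the conceptual heart of the argument: everything hinges on the coefficients \(a_i\) being \(q\)-independent, so that the single polynomial identity of the Main Theorem propagates up the tower \(\{\FF_{q^m}\}_m\).

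Since \(Y\) is smooth over \(\FF_q\) and \(\sharp Y(\FF_{q^m})=\sum_i a_iq^{im}\) for all \(m\), the Proposition gives \(\chi(Y)=\sum_i a_i\) once we know each \(a_i\in\ZZ\); this integrality is a small but genuine point, and I would verify it from the arithmetic constraints on \(l\). For cyclic type the nontrivial \(C_3\)-action forces \(l\equiv 1\) modulo \(3\), and since \(l\) is an odd prime this gives \(6\mid l-1\), making \(\frac{l-1}6\), \(\frac{(l-1)(l+4)}6\) and \(\frac{(l-1)(l-2)}6\) integral; for symmetric type one checks that for any odd prime \(l\ne 3\) both \(\frac{(l+5)(l+7)}{12}\) and \(\frac{(l+1)(l+5)}{12}\) are integers by reducing modulo \(12\). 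It then remains only to sum coefficients: \(1+(2+\frac{l-1}6)+\frac{l-1}6=3+\frac{l-1}3\) for \(C_l\rtimes C_3\); \(1+(2+\frac{(l-1)(l+4)}6)+\frac{(l-1)(l-2)}6=3+\frac{l^2-1}3\) for \(C_l^2\rtimes C_3\); and \(1+\frac{(l+5)(l+7)}{12}+\frac{(l+1)(l+5)}{12}=\frac{(l-1)(l-2)}6+2l+4\) for symmetric type, a rearrangement that specializes to \(1+6+1=8\) when \(G\cong C_2^2\rtimes\symm\). These are the routine identities already recorded, and beyond the uniformity observation no further obstacle remains.
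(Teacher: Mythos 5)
Your proposal is correct and takes essentially the same route as the paper: the paper's own proof is just the two-line observation that \(\sharp Y(\FF_q)=\stpt(V/G)\), followed by an appeal to the Weil-conjecture proposition and Theorem \ref{MT}, exactly as you do. The points you spell out --- that the hypotheses of the Main Theorem persist over every \(\FF_{q^m}\) so the same coefficients \(a_i\) serve for all \(m\), that the \(a_i\) are integers, and the final coefficient sums --- are precisely the details the paper leaves implicit, so your write-up is a more careful version of the identical argument rather than a different one.
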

\begin{proof}
By the property of the stringy-point count, we have
\[
\sharp Y(\FF_q)=\sharp_{st}V/G.
\]
Hence the assertion follows from the previous proposition and Theorem \ref{MT}.
\end{proof}
\begin{rem}
From \cite[Theorem 1.2]{Yam}, the quotient varieties \(V/G\) have crepant resolutions. 
\end{rem}

\end{document}